\documentclass[11pt]{amsart}

\usepackage{amsfonts, amstext, amsmath, amsthm, amscd, amssymb}
\usepackage{graphicx, color, epsfig, subfigure, wrapfig, overpic}
\usepackage[all,cmtip]{xy} 
\usepackage{import}

\textwidth 6.07in 
\textheight 8.6in 
\oddsidemargin 0.18in
\evensidemargin 0.18in
 

\catcode `\@=11
\long\def\@savemarbox#1#2{\global\setbox#1\vtop{\hsize\marginparwidth 
  \@parboxrestore\tiny\raggedright #2}}
\marginparwidth .75in \marginparsep 5pt

\catcode`\@=12

\AtBeginDocument{%
   \def\MR#1{}
}

\newcommand{\R}{\mathbb{R}}

\newcommand{\RR}{\mathbb{R}}
\newcommand{\HH}{\mathbb{H}}
\newcommand{\ZZ}{\mathbb{Z}}

\renewcommand{\P}{\mathcal P}
\newcommand{\A}{\mathcal A}
\newcommand{\W}{\mathcal W}
\newcommand{\vol}{{\rm vol}}

\newcommand{\lip}{{\rm Lip}}

\newcommand{\Oct}{{\rm Oct}}

\newcommand{\voct}{{v_{\rm oct}}}
\newcommand{\vtet}{{v_{\rm tet}}}

\def\co{\colon\thinspace}

\theoremstyle{plain}
\newtheorem{theorem}{Theorem}[section]
\newtheorem{corollary}[theorem]{Corollary}
\newtheorem{lemma}[theorem]{Lemma}

\newtheorem{conjecture}[theorem]{Conjecture}

\newtheorem*{namedtheorem}{\theoremname}
\newcommand{\theoremname}{testing}

\theoremstyle{definition}
\newtheorem{define}[theorem]{Definition}

\newtheorem{remark}[theorem]{Remark}

\title[Volume bounds for weaving knots]{Volume bounds for weaving knots}
\author[A.\ Champanerkar]{Abhijit Champanerkar}
\address{Department of Mathematics, College of Staten Island \& The Graduate Center, City University of New York, New York, NY}
\email{abhijit@math.csi.cuny.edu}
\author[I. \ Kofman]{Ilya Kofman}
\address{Department of Mathematics, College of Staten Island \& The Graduate Center, City University of New York, New York, NY}
\email{ikofman@math.csi.cuny.edu}
\author[J. \ Purcell]{Jessica S.\ Purcell}
\address{School of Mathematical Sciences, 9 Rainforest Walk, Monash University, Victoria 3800, Australia}
\email{jessica.purcell@monash.edu}


\begin{document}

\begin{abstract}
Weaving knots are alternating knots with the same projection as
torus knots, and were conjectured by X.-S.~Lin to be among the maximum
volume knots for fixed crossing number.  We provide the first
asymptotically sharp volume bounds for weaving knots, and we prove
that the infinite square weave is their geometric limit.
\end{abstract}

\maketitle

\section{Introduction}

The crossing number, or minimum number of crossings among all diagrams
of a knot, is one of the oldest knot invariants, and has been used to
study knots since the 19th century.  Since the 1980s, hyperbolic
volume has also been used to study and distinguish knots. We are
interested in the relationship between volume and crossing number.  On
the one hand, it is very easy to construct sequences of knots with
crossing number approaching infinity but bounded volume. For example,
start with a reduced alternating diagram of an alternating knot, and
add crossings by twisting two strands in a fixed twist region of the
diagram.
By work of J\o rgensen and Thurston, the volume of the resulting
sequence of knots is bounded by the volume of the link obtained by
augmenting the twist region (see \cite[page 120]{thurston:notes}). 
However, since reduced alternating diagrams realize the crossing number 
(see for example \cite{thistlethwaite}), the crossing number increases with 
the number of crossings in the twist region.

On the other hand, since there are only a finite number of knots with
bounded crossing number, among all such knots there must be one (or
more) with maximal volume. It is an open problem to determine the
maximal volume of all knots with bounded crossing number, and to find
the knots that realize the maximal volume per crossing number.

In this paper, we study a class of knots and links that are candidates
for those with the largest volume per crossing number: \emph{weaving knots}.
For these knots and links, we provide explicit, asymptotically sharp bounds on their volumes. We also prove that they converge geometrically to an infinite link complement which asymptotically maximizes volume per crossing number. Thus, while our methods cannot answer the question of which knots maximize volume per crossing number, they provide evidence that weaving knots are among those with largest volume
once the crossing number is bounded.

A {\em weaving knot} $W(p,q)$ is the alternating knot or link with the
same projection as the standard $p$--braid
$(\sigma_1\ldots\sigma_{p-1})^q$ projection of the torus knot or link
$T(p,q)$. Thus, the crossing number $c(W(p,q)) = q(p-1)$. For example,
$W(5,4)$ and $W(7,12)$ are shown in Figure~\ref{fig:Wpq}. By our
definition, weaving knots can include links with many components,
and throughout this paper, \emph{weaving knots} will denote both knots and links.

\begin{figure}[h]
\begin{tabular}{ccc}
\includegraphics[scale=0.15]{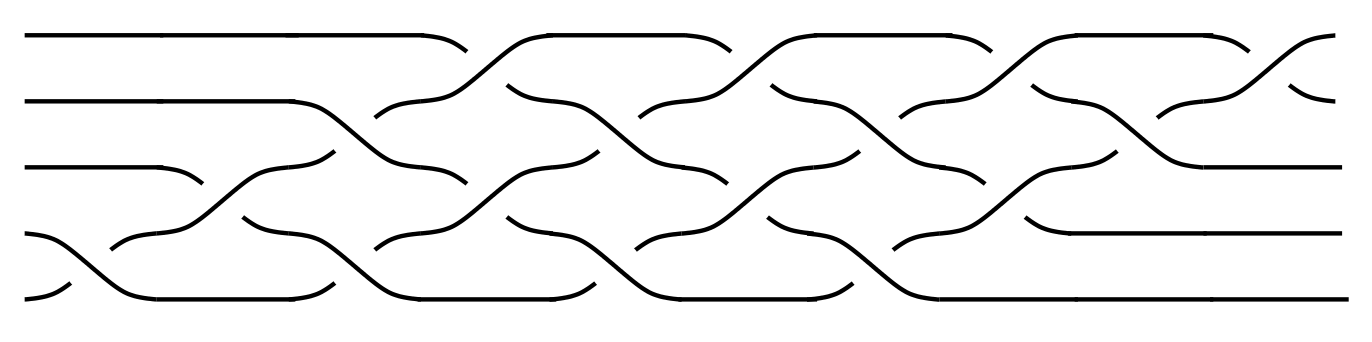} & \hspace*{0.5in} & \includegraphics[scale=0.22]{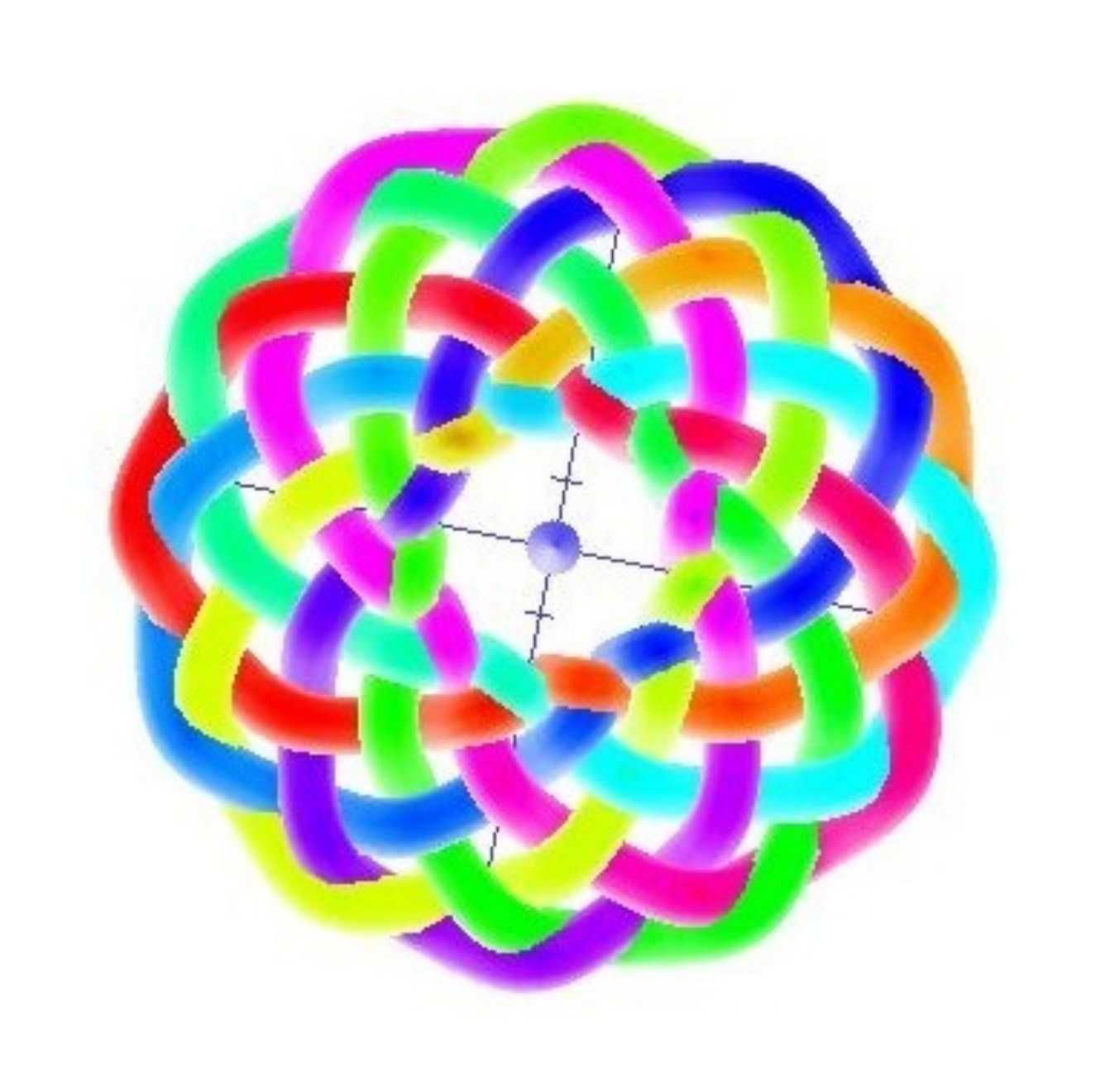}\\
(a) & \qquad & (b) \\
\end{tabular}
\caption{(a) $W(5,4)$ is the closure of this braid.
  (b) $W(7,12)$ figure modified from~\cite{pennock}. }
  \label{fig:Wpq}
\end{figure}

Xiao-Song~Lin suggested in the early 2000s that weaving knots would be among the knots with largest volume for fixed crossing number.
In fact, we checked that $W(5,4)$ has the second largest volume among all $1,701,936$ prime knots with $c(K) \leq 16$ (good guess!). These knots were classified by Hoste, Thistlethwaite, and Weeks \cite{HosteThistleWeeks}, and are available for further study, including volume computation, in the Knotscape~\cite{knotscape} census, or via SnapPy~\cite{snappy}.

It is a consequence of our main results in \cite{ckp:gmax} that weaving knots are {\em geometrically maximal}. That is, they satisfy:
\begin{equation}\label{eqn:VolLim}
  \lim_{p,q\to\infty}\frac{\vol(W(p,q))}{c(W(p,q))}= \voct,
\end{equation}
where $\voct \approx 3.66$ is the volume of a regular ideal octahedron, and $\vol(\cdot)$ and $c(\cdot)$ denote volume and crossing number, respectively. Moreover, it is known that for any link the {\em volume density} $\vol(K)/c(K)$ is always bounded above by $\voct$. 

What was not known is how to obtain sharp estimates on the volumes of
$W(p,q)$ in terms of $p$ and $q$ alone, which is needed to bound
volume for fixed crossing number.
We will say that volume bounds are \emph{asymptotically sharp} if the ratio of the lower and upper bounds approaches~$1$ as $p,\,q$ approach infinity.
Lackenby gave bounds on volumes of alternating knots and links~\cite{lackenby}. The upper bound was improved by Agol and D.~Thurston~\cite{lackenby}, and then again by Adams \cite{AdamsBound} and by Dasbach and Tsvietkova \cite{DasbachTsvietkova}. The lower bound was improved by Agol, Storm and Thurston~\cite{AgolStormThurston}.
However, these bounds are not asymptotically sharp, 
nor can they be used to establish the limit of
equation~\eqref{eqn:VolLim}. Our methods in~\cite{ckp:gmax} also fail
to give bounds on volumes of knots for fixed crossing number,
including $W(p,q)$. Thus, it seems that determining explicit,
asymptotically
sharp volume bounds in finite cases is harder and requires different methods than proving the asymptotic volume density $\vol(K_n)/c(K_n)$ for sequences
of links.

In this paper, for weaving knots $W(p,q)$ we provide asymptotically sharp, explicit bounds on volumes in terms of $p$ and $q$ alone.

\begin{theorem}\label{thm:lower-bound}
If $p\geq 3$ and $q\geq 7$, then
\[
\voct\, (p-2)\,q\, \left(1-\frac{(2\pi)^2}{q^2}\right)^{3/2} \: \leq \: \vol(W(p,q)) \: < \: (\voct\, (p-3) + 4\vtet)\,q.
\]
\end{theorem}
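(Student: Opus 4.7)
\emph{Proof proposal.} The plan is to prove the two inequalities by distinct methods: the upper bound via an explicit ideal polyhedral decomposition of $S^3 \setminus W(p,q)$, and the lower bound by realizing $S^3 \setminus W(p,q)$ as a Dehn filling of a larger link complement whose volume can be computed directly, then applying the Futer--Kalfagianni--Purcell (FKP) volume estimate for Dehn surgery.

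\textbf{Upper bound.} I would apply the Menasco/Thistlethwaite-style polyhedral decomposition of an alternating link complement to the alternating braid projection of $W(p,q)$. Because the projection is cyclically periodic, consisting of $q$ copies of $\sigma_1\cdots\sigma_{p-1}$, the decomposition is naturally organized by cyclic period. A careful bookkeeping shows that per period each of the $p-3$ interior crossings $\sigma_2,\ldots,\sigma_{p-2}$ contributes a single ideal octahedron, while the two outermost crossings $\sigma_1$ and $\sigma_{p-1}$ each contribute two ideal tetrahedra (the ideal bipyramid at a crossing on the braid boundary degenerates into two tetrahedra when a pair of opposite equatorial faces is collapsed). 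Summing across all $q$ periods yields $(p-3)q$ ideal octahedra and $4q$ ideal tetrahedra. Since $\voct$ and $\vtet$ are the maximum volumes of ideal octahedra and tetrahedra in $\HH^3$, and since the pieces of the decomposition cannot simultaneously be regular, straightening to the complete hyperbolic structure produces the strict inequality
\[
\vol(W(p,q)) \;<\; \bigl((p-3)\voct + 4\vtet\bigr)\,q.
\]

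\textbf{Lower bound.} The plan is to produce a hyperbolic manifold $N_{p,q}$ of which $S^3\setminus W(p,q)$ is a Dehn filling, with $\vol(N_{p,q}) = (p-2)q\,\voct$ coming from an explicit octahedral decomposition. Concretely, I would take $N_{p,q}$ to be the complement of an augmentation of $W(p,q)$ (for instance, $W(p,q)$ together with the braid axis, or with crossing circles) whose complement admits a decomposition into $(p-2)q$ ideal octahedra; by exhibiting a regular-octahedral Euclidean structure satisfying every gluing and completeness equation, one concludes that $N_{p,q}$ is complete hyperbolic with volume exactly $(p-2)q\,\voct$. The required decomposition is built by adapting the octahedral decomposition of the infinite square weave $\W$ to a finite cyclic quotient matching the combinatorics of $W(p,q)$. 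Next, I would show that the filling slopes on the augmentation cusps have length at least $q$ on the maximal horocusps; the linear growth in $q$ comes from the $q$-fold cyclic symmetry of the decomposition, which stretches the relevant cusp cross-section by factor $q$ in the longitudinal direction. Since $q\geq 7 > 2\pi$, the FKP theorem applies and yields
\[
\vol(W(p,q)) \;\geq\; \bigl(1 - (2\pi/q)^2\bigr)^{3/2}\vol(N_{p,q}) \;=\; \voct\,(p-2)\,q\,\bigl(1 - (2\pi/q)^2\bigr)^{3/2}.
\]

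\textbf{Main obstacle.} Essentially all of the hyperbolic-geometry content lives on the lower-bound side: constructing $N_{p,q}$ and its octahedral decomposition, verifying that the regular-octahedron Euclidean structure solves the gluing equations (so $N_{p,q}$ is genuinely hyperbolic with volume $(p-2)q\,\voct$), and computing the maximal horocusp precisely enough to conclude that the filling slope length is at least $q$. Once these pieces are in place, FKP applies as a black box, and the upper bound reduces to combinatorial bookkeeping on the alternating projection.
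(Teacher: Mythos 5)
Your overall architecture for the lower bound --- drill out the braid axis, bound the volume of the augmented manifold from below, show the filling slope has length at least $q$, and apply the Futer--Kalfagianni--Purcell $\left(1-(2\pi/\ell)^2\right)^{3/2}$ estimate --- is exactly the paper's. But the central step, where you claim $\vol(N_{p,q}) = (p-2)q\,\voct$ exactly via a decomposition into $(p-2)q$ regular ideal octahedra solving the gluing and completeness equations, is false, and it is precisely the point where the paper has to work hardest. The decomposition of $S^3-(W(p,q)\cup B)$ (cone the interior regions of the $W(p,1)$ diagram to the braid axis above and below, then pass to the $q$-fold cover) consists of $(p-3)q$ octahedra and $4q$ tetrahedra, not $(p-2)q$ octahedra, and these pieces cannot all be regular: if they were, the volume would equal $(\voct(p-3)+4\vtet)q$, which exceeds $\voct(p-2)q$ because $4\vtet>\voct$. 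The case $p=3$ makes the failure concrete: there the augmented complement is built from $4q$ regular ideal tetrahedra and has volume exactly $4q\,\vtet\approx 4.06q$, not $q\,\voct\approx 3.66q$. What is actually true, and what the paper proves, is only the inequality $\vol(S^3-(W(p,q)\cup B))\geq \voct(p-2)q$; this is obtained by exhibiting an explicit \emph{angle structure} of that volume (the four tetrahedra per period are assigned angles $\pi/2,\pi/4,\pi/4$, so each contributes $\voct/4$ rather than $\vtet$), and then invoking the Casson--Rivin theorem that the hyperbolic volume dominates the volume of any angle structure, which in turn requires showing the volume functional is maximized in the interior of the space of angle structures --- a separate boundary analysis of degenerating tetrahedra. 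None of this appears in your outline, so you have no proof of the needed lower bound on $\vol(N_{p,q})$.

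The upper bound has a related gap. You assert a decomposition of $S^3-W(p,q)$ itself into $(p-3)q$ ideal octahedra and $4q$ ideal tetrahedra, but Menasco-style decompositions of an alternating complement are organized by the $(p-1)q$ crossings and only give the weaker bounds quoted in the paper's introduction. The stronger count holds for the \emph{augmented} manifold $S^3-(W(p,q)\cup B)$: the octahedra are cones over the quadrilateral regions of the diagram with apexes on the braid axis, so they require $B$ to be removed and do not survive as ideal polyhedra after filling. The paper then deduces the bound for $W(p,q)$ from the strict decrease of volume under Dehn filling, a step your argument omits. Your meridian-length claim is essentially the paper's Lemma 3.6 (the meridian of the axis of $W(p,q)$ is a $q$-fold cover of that of $W(p,1)$, which has length at least $1$ on a maximal cusp), and that part of the outline is sound.
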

Here $\vtet \approx 1.01494$ is the volume of the regular ideal tetrahedron, and $\voct$ is the same as above.  Since $c(W(p,q)) = q\,(p-1)$, these bounds provide another proof of equation~\eqref{eqn:VolLim}.
In contrast, using \cite{AdamsBound, AgolStormThurston, DasbachTsvietkova, lackenby} the best current volume bounds for any knot or link $K$ with a prime alternating twist--reduced diagram with no bigons and $c(K)\geq 5$ crossings are
\[
\frac{\voct}{2}\, (c(K)-2) \: \leq \: \vol(K) \: \leq \: \voct\,(c(K)-5) + 4\vtet.
\]

The methods involved in proving Theorem~\ref{thm:lower-bound} are completely different than those used in \cite{ckp:gmax}, which relied on volume bounds via guts of 3--manifolds cut along essential surfaces as in~\cite{AgolStormThurston}.  Instead, the proof of Theorem~\ref{thm:lower-bound} involves explicit angle structures and the convexity of volume, as in~\cite{rivin}.

Moreover, applying these asymptotically sharp volume bounds for the links $W(p,q)$, we prove that their geometric structures converge, as follows.

The \emph{infinite square weave} $\W$ is defined to be the infinite alternating link with the square grid projection, as in Figure~\ref{fig:infweave}. In \cite{ckp:gmax}, we showed that there is a complete hyperbolic structure on $\RR^3-\W$ obtained by tessellating the manifold by regular ideal octahedra such that the volume density of $\W$ is exactly $\voct$.
The notion of a geometric limit is a standard way of expressing convergence of geometric structures on distinct manifolds, given in Definition~\ref{def:geomlimit} below.

\begin{figure}
 \includegraphics[scale=0.35]{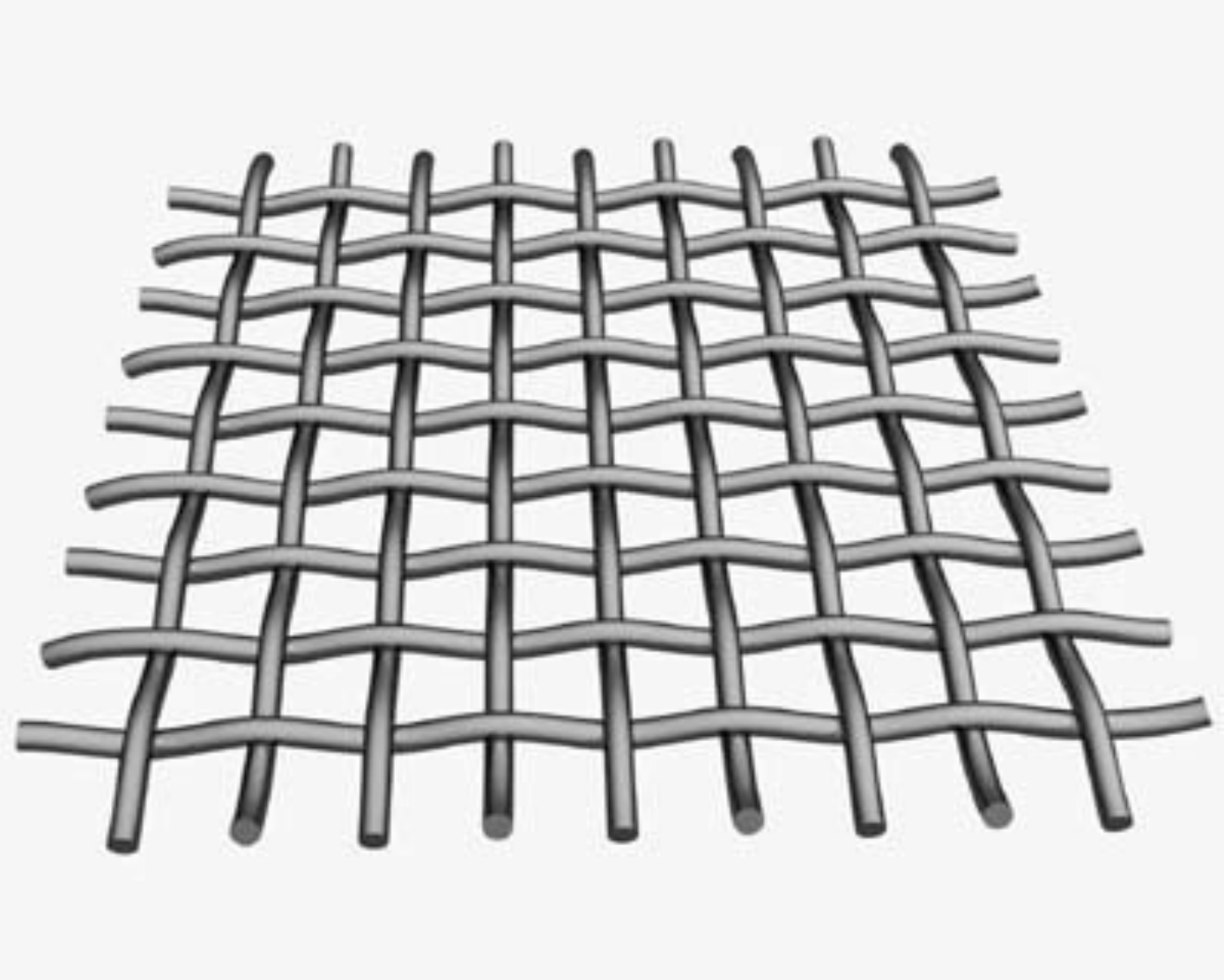}
\caption{The infinite alternating weave}
\label{fig:infweave}
\end{figure}

\begin{theorem}\label{thm:geolimit}
As $p,q \to \infty$, $S^3-W(p,q)$ approaches $\R^3-\W$ as a geometric limit.
\end{theorem}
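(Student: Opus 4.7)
The plan is to exhibit, for each $W(p,q)$, an ideal octahedral decomposition of $S^3-W(p,q)$ that mirrors, combinatorially, the regular ideal octahedral tessellation of $\R^3-\W$, and then to use Theorem~\ref{thm:lower-bound} together with the convexity of volume on the space of angle structures (Rivin) to force the hyperbolic shapes of the polyhedra near a chosen basepoint to become regular as $p,q\to\infty$. A standard Gromov-compactness style packaging then produces $\R^3-\W$ as the pointed geometric limit in the sense of Definition~\ref{def:geomlimit}.

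More precisely, the lower bound of Theorem~\ref{thm:lower-bound} is proved by decomposing $S^3-W(p,q)$ into $q(p-1)$ ideal octahedra, one per crossing, glued according to the alternating diagram; the same combinatorial recipe applied to the infinite square grid yields the decomposition of $\R^3-\W$ into regular ideal octahedra. I would pick a basepoint $x_{p,q}$ at a crossing near the center of the $(p,q)$-braid, which by the $\ZZ_p\times\ZZ_q$ symmetry of $W(p,q)$ is essentially canonical. For any combinatorial radius $R$, the $R$-ball in the dual graph about $x_{p,q}$ is isomorphic, for $p,q$ sufficiently large, to the $R$-ball in the dual graph of $\R^3-\W$ about a fixed crossing. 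The explicit angle structure $\alpha_{p,q}$ constructed in the proof of Theorem~\ref{thm:lower-bound} realizes its lower bound and assigns angles that are close to those of the regular ideal octahedron on interior crossings. By Rivin's theorem, the complete hyperbolic structure on $S^3-W(p,q)$ corresponds to the unique critical point of the volume functional on the space of angle structures, which is strictly concave with maximum equal to $\vol(S^3-W(p,q))$. Since the upper and lower bounds in Theorem~\ref{thm:lower-bound} together with~\eqref{eqn:VolLim} pin the volume density to $\voct$, strict concavity forces the hyperbolic angle assignment to agree with $\alpha_{p,q}$, hence with the regular assignment, to within $\epsilon$ on any fixed combinatorial $R$-ball about $x_{p,q}$, provided $p,q$ are large enough.

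Third, gluing the resulting nearly regular octahedra produces a $(1+\epsilon)$-bi-Lipschitz embedding of the hyperbolic $R$-ball of $\R^3-\W$ about a fixed basepoint $x_\infty$ into $S^3-W(p,q)$ carrying $x_\infty$ to $x_{p,q}$. Taking $R\to\infty$ and $\epsilon\to 0$ along a diagonal subsequence yields the pointed geometric convergence $(S^3-W(p,q),x_{p,q})\to(\R^3-\W,x_\infty)$, and the uniqueness of this limit then upgrades subsequential convergence to convergence of the full sequence. The main obstacle is the quantitative step in the middle: Rivin's convexity is a global statement, and one must convert near-maximality of the total volume into uniform near-regularity of the individual octahedra in a combinatorial $R$-ball. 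This requires an effective lower bound on the Hessian of the volume functional at the regular angle structure, together with an argument that boundary effects arising from the braid closure cannot concentrate irregularity at the central octahedra. Establishing this propagation of near-regularity from the total volume defect is, I expect, the technical heart of the proof.
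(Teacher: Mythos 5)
Your overall strategy --- find a combinatorially matching octahedral decomposition, show the octahedra near a central basepoint become regular, and package this as pointed bilipschitz convergence --- is the right shape, but there are three concrete gaps, two of which are fatal as written. First, the decomposition you invoke is not the one the paper has available. The lower bound of Theorem~\ref{thm:lower-bound} is \emph{not} proved by cutting $S^3-W(p,q)$ into $q(p-1)$ octahedra, one per crossing; it is proved by decomposing $S^3-(W(p,q)\cup B)$ (the braid axis $B$ drilled out) into $(p-3)q$ ideal octahedra --- one per non-peripheral square \emph{region} of the diagram --- together with $4q$ tetrahedra (Lemma~\ref{lemma:polyhedra}), and then estimating the volume change under Dehn filling $B$. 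A per-crossing octahedral decomposition of $S^3-W(p,q)$ itself is not known to be geometric, so you cannot read off volumes of its pieces. Because the working decomposition lives in the braid-axis complement $M_{p,q}=S^3-(W(p,q)\cup B)$, your argument can at best show $M_{p,q}\to\R^3-\W$; you still need to transfer the geometric convergence across the Dehn filling that recovers $S^3-W(p,q)$. The paper does this with the effective drilling/filling theorems (Hodgson--Kerckhoff, Brock--Bromberg, in Magid's formulation), using that the normalized length of the meridian slope on $B$ grows like $\sqrt{q}$. This step is entirely missing from your proposal and cannot be omitted.

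Second, the step you explicitly leave open --- converting near-maximality of total volume into uniform near-regularity of the octahedra in a fixed combinatorial ball --- is genuinely the crux, but it does not require any effective Hessian bound for the volume functional, and the paper does not use angle structures here at all. Instead it works with the actual hyperbolic volumes of the pieces: each ideal octahedron has volume at most $\voct$ (Lemma~\ref{lemma:stellatedoct} and \cite[Theorem~10.4.7]{ratcliffe}), while the total volume of $M_{p,1}$ is at least $(p-2)\voct$ (Theorem~\ref{thm:WUaxis}). A pigeonhole count (Lemma~\ref{lem:big-oct-row}) then shows that if too many octahedra had volume bounded away from $\voct$, the total would drop below the lower bound; hence one finds arbitrarily long runs of consecutive octahedra each with volume within $\epsilon$ of $\voct$, and the $q$-fold cover $M_{p,q}\to M_{p,1}$ promotes a run to a $k\times k$ grid (Corollary~\ref{cor:big-oct-grid}). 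Uniqueness of the volume maximizer among ideal octahedra then forces each of these to converge to the regular one. Note also that your ``boundary effects cannot concentrate irregularity at the center'' worry is sidestepped: the argument does not locate the good octahedra at the center a priori, it merely finds a long consecutive run somewhere and bases the balls $B_R(x_{p,q})$ there. So your plan identifies the right difficulties but resolves neither the quantitative regularity step nor the passage from the braid-axis complement back to the knot complement.
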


Proving that a class of knots or links approaches $\R^3-\W$ as a geometric limit seems to be difficult. For example, in~\cite{ckp:gmax} we showed that many families of knots $K_n$ with diagrams approaching that of $\W$, in an appropriate sense, satisfy $\vol(K_n)/c(K_n) \to \voct$. However, it is unknown whether their complements $S^3-K_n$ approach $\R^3-\W$ as a geometric limit, and the proof in~\cite{ckp:gmax} does not give this information. 
Theorem~\ref{thm:geolimit} provides the result for $W(p,q)$.

It is an interesting fact that every knot and link can be obtained by
changing some crossings of $W(p,q)$ for some $p,\,q$.
This was proved for standard diagrams of torus knots and links by Manturov~\cite{Manturov}, so the same result holds for weaving knots as well.
We conjecture that the upper volume bound in Theorem \ref{thm:lower-bound} applies to any knot or link
obtained by changing crossings of $W(p,q)$.
This conjectured upper bound would give better volume bounds in certain cases than the general bounds mentioned above,
but more significantly, this conjecture is a special case (and provides a test) of the following conjecture, which appears in \cite{ckp:gmax}. 

\begin{conjecture}\label{conj:alt}
Let $K$ be an alternating hyperbolic knot or link, and $K'$ be obtained by changing any proper subset of crossings of $K$.  Then $\displaystyle \vol(K') < \vol(K)$.
\end{conjecture}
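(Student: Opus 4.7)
The plan is to reduce the conjecture to a simultaneous Dehn filling problem and then exploit convexity of the volume functional on a space of angle structures, in the spirit of Theorem~\ref{thm:lower-bound}. Let $C$ denote the nonempty subset of crossings at which $K$ and $K'$ disagree, and let $L$ be the augmented link obtained from $K$ by inserting a small crossing circle around each crossing of $C$. Then both $K$ and $K'$ are realized as Dehn fillings of $L$: each crossing circle carries two canonical slopes in its natural framing, one recovering the original alternating crossing and the other recovering its reverse. Thus $K$ is the filling along the ``alternating slope'' at every crossing circle, while $K'$ uses the ``reversed slope'' on precisely the circles indexed by $C$.

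The next step is to verify that $S^3-L$ is hyperbolic and admits a tractable ideal polyhedral decomposition. Since $K$ is hyperbolic and alternating, this should follow from the standard checkerboard polyhedral decomposition for augmented alternating links (developed by Agol, Adams, and Futer--Purcell, among others): the two checkerboard colorings of the diagram of $K$ produce a decomposition of $S^3-L$ into two ideal polyhedra with faces indexed by checkerboard regions. One then parametrizes positive angle structures on this decomposition and imposes linear constraints corresponding to the holonomies of the prescribed filling slopes at each crossing-circle cusp. By the Casson--Rivin--Thurston principle, the hyperbolic volume of each Dehn filling equals the maximum of the concave volume functional on the affine subspace of angle structures realizing the prescribed slopes.

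The conjecture then reduces to the claim that this maximum on the ``all-alternating'' locus strictly exceeds its maximum on any locus obtained by flipping a nonempty subset of slopes. This is the main obstacle. A priori both constrained maxima are strictly less than $\vol(L)$ by the hyperbolic Dehn surgery theorem, but no direct symmetry forces the alternating locus to win. Heuristically, the alternating filling respects a checkerboard involution of the polyhedral decomposition, while flipping a crossing tilts the corresponding slope constraint against this symmetry; converting this heuristic into a strict quantitative inequality appears genuinely hard. A plausible route combines (i) an Agol--Storm--Thurston-type lower bound on $\vol(K)$ using the essentiality of the checkerboard surfaces of $K$ (Menasco), which is lost in $K'$, with (ii) an upper bound on $\vol(K')$ obtained by explicitly perturbing a symmetric angle structure on $S^3-L$ to realize the flipped slopes. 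Establishing strict inequality in every finite case, rather than merely asymptotically, is the crux of the difficulty, and explains why this conjecture has remained open.
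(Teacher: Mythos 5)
There is nothing to compare against: the statement you are addressing is stated in the paper as Conjecture~\ref{conj:alt} and is explicitly left open (the authors present it as a special case of a conjecture from \cite{ckp:gmax} and offer no proof). So the relevant question is only whether your proposal actually closes the gap, and it does not. Your reduction --- augmenting each crossing of $C$ by a crossing circle to form a link $L$, realizing both $K$ and $K'$ as Dehn fillings of $S^3-L$, and expressing each filled volume as the maximum of the concave volume functional over an affine slice of angle structures --- is a reasonable framework, but the entire content of the conjecture is then concentrated in the step you yourself flag: showing that the constrained maximum on the ``all-alternating'' locus strictly exceeds the maximum on every flipped locus. No argument is supplied for this, and the surrounding facts you cite cannot supply it. The hyperbolic Dehn surgery theorem gives $\vol(K)<\vol(L)$ and $\vol(K')<\vol(L)$, which orders neither quantity against the other; the Agol--Storm--Thurston lower bound for $\vol(K)$ via essential checkerboard surfaces and any upper bound you might extract for $\vol(K')$ from a perturbed angle structure live at different scales and are nowhere shown to interleave. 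So the proposal is a research program, not a proof.

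There are also secondary issues you would need to address even to set up the program rigorously: the Casson--Rivin principle as used in this paper (and in Theorem~\ref{thm:WUaxis}) computes the volume of the \emph{unfilled} manifold from an interior critical point of $\vol$ on $\A(\P)$; extending it to compute volumes of Dehn fillings by imposing holonomy constraints on the slopes requires the deformation variety machinery and a proof that the constrained critical point is interior and geometric for \emph{every} choice of flipped slopes, which is far from automatic. None of this is a criticism of your honesty --- you correctly identify the crux --- but as a proof attempt it has a genuine and acknowledged gap exactly where the conjecture lives.
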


\subsection{Acknowledgements}
We thank Craig Hodgson for helpful conversations. We also thank an anonymous referee for detailed comments, which have improved the clarity and accuracy of this paper. The first two authors acknowledge support by the Simons Foundation and PSC-CUNY. The third author acknowledges support by the National Science Foundation under grants number DMS--1252687 and DMS-1128155.

\section{Triangulation of weaving knots}\label{sec:triangulation}

Consider the weaving knot $W(p,q)$ as a closed $p$--braid. Let $B$
denote the braid axis. In this section, we describe a decomposition of
$S^3-(W(p,q)\cup B)$ into ideal tetrahedra and octahedra. This leads
to our upper bound on volume, obtained in this section.  In
Section~\ref{sec:lowerbounds} we will use this decomposition to prove
the lower bound as well.

Let $p\geq 3$. Note that the complement of $W(p,q)$ in $S^3$ with the braid axis also removed is a $q$--fold cover of the complement of $W(p,1)$ and its braid axis.

\begin{lemma}\label{lemma:polyhedra}
Let $B$ denote the braid axis of $W(p,1)$.  Then $S^3-(W(p,1)\cup B)$ admits an ideal polyhedral decomposition $\P$ with four ideal tetrahedra and $p-3$ ideal octahedra.

Moreover, a meridian for the braid axis runs over exactly one side of one of the ideal tetrahedra.  The polyhedra give a polygonal decomposition of the boundary of a horoball neighborhood of the braid axis, with a fundamental region consisting of four triangles and $2(p-3)$ quadrilaterals, as shown in Figure~\ref{fig:cusp-triang}.
\end{lemma}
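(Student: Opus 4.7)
The plan is to build the decomposition $\P$ by a Menasco-style cutting argument adapted to include the braid axis $B$ as an additional cusp. View $W(p,1)$ as the closure of the alternating braid $\sigma_1 \sigma_2^{-1} \sigma_3 \sigma_4^{-1} \cdots$, so that $W(p,1)$ lies on a standard annulus $A$ whose core is $B$, with exactly $p-1$ crossings arranged in a single row around $A$. First I would enlarge $A$ to a 2-sphere $S$ by attaching two disks, each bounded by a component of $\partial A$ and meeting $B$ transversely in one point. Cutting $S^3$ along $S$ yields two 3-balls, each containing half of $W(p,1) \cup B$; in each 3-ball I would collapse the overstrands and understrands to ideal edges in the usual way, so that each 3-ball becomes a cell complex with ideal vertices coming from the strands of $W(p,1)$, together with two additional ideal vertices where $B$ meets $S$.

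The next step is to subdivide each of these two collapsed polyhedra into ideal tetrahedra and half-octahedra. The row of $p-1$ crossings around $A$ gives $p-1$ local pieces; away from the two arcs where the braid closes up, the local combinatorics near each crossing is symmetric between the upper and lower 3-ball, and the two halves glue to form one ideal octahedron per interior crossing, with a pair of opposite ideal vertices sitting on $B$. This accounts for the $p-3$ octahedra. Near the two closure arcs of the braid, the situation is asymmetric, and I would verify that the local pieces glue together into four ideal tetrahedra (two per closure arc). The counts then match the four tetrahedra and $p-3$ octahedra claimed in the lemma.

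Finally, I would read off the cusp cross-section of $B$ from this decomposition. Each interior-crossing octahedron contributes two quadrilaterals to the cusp at $B$ (one at each of its two ideal vertices on $B$), and each of the four closure tetrahedra contributes one triangle, giving the fundamental domain of $2(p-3)$ quadrilaterals and four triangles shown in Figure~\ref{fig:cusp-triang}. A meridian for $B$ can then be traced inside this fundamental domain and seen to cross exactly one face of one of the four tetrahedra. The main obstacle is the bookkeeping near the two closure arcs, where the symmetric collapse breaks down and one must identify by hand how the four tetrahedra sit relative to $B$; once this local picture is drawn carefully for a small base case (for example $p=3$, where there are no octahedra and only the four tetrahedra appear), the pattern for general $p$ follows by inserting $p-3$ octahedra into the row of crossings, each gluing onto its neighbors along a quadrilateral face, without disturbing the tetrahedral pieces at the two closures.
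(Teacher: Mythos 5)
Your outline diverges from the paper's construction and, more importantly, defers exactly the steps that constitute the content of the lemma. The paper does not cut along the projection sphere Menasco-style; it adds one crossing arc per crossing, which divides the projection plane into two triangles, $p-3$ quadrilaterals, and two exterior regions, and then cones each bounded region to the braid axis $B$ above and below the plane. The octahedra are therefore indexed by the \emph{quadrilateral regions} of the diagram (each is the suspension of a quadrilateral, with four equatorial ideal vertices on strands of $W(p,1)$ and two apexes on $B$), and the four tetrahedra come from coning the two triangular regions. Your decomposition instead assigns one octahedron to each ``interior crossing.'' The counts agree numerically ($p-3$ in both cases), but the mechanism you describe is not justified: in a Menasco-style cut the natural object at a crossing is a four-valent ideal vertex, and the standard per-crossing octahedral decomposition puts the two apexes at $\pm\infty$ of the projection sphere, not on $B$. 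The arc of $B$ inside each half-ball lies in the two exterior regions of the diagram, so it is not adjacent to the interior crossings in any obvious way; making $B$ an apex of every octahedron is precisely what forces one to cone the \emph{regions} to $B$, which is the paper's construction and which you do not carry out. Also, as a point of terminology, $B$ is not the core of the annulus carrying the closed braid; it is the core of the complementary solid torus.

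The deeper problem is that every nontrivial assertion in the lemma is left as ``I would verify'' or ``once this local picture is drawn carefully, the pattern follows.'' The lemma requires (i) checking that the proposed polyhedra actually glue up to give $S^3-(W(p,1)\cup B)$, which in the paper hinges on a non-obvious identification in the two exterior regions: the single triangular face $T$ there has its two edges to $B$ isotopic to one another, and $T$ is simultaneously isotopic to a face of the top-left tetrahedron and to a face of the bottom-left tetrahedron, so those two tetrahedra are glued to each other there; (ii) stepping through the gluings around the cusp of $B$ to obtain the fundamental region of four triangles and $2(p-3)$ quadrilaterals; and (iii) observing that the isotopy carrying edge $1$ of $T$ to edge $2$ drags the ideal vertex on $B$ around a full meridian, which is what shows the meridian runs over exactly one face of one tetrahedron. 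Your proposal asserts the conclusions of (ii) and (iii) outright and does not engage with (i) at all. Since the combinatorial bookkeeping near the closure of the braid is exactly where the four tetrahedra, the meridian claim, and the cusp picture are established, deferring it leaves the proof with no content beyond the statement of the lemma.
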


\begin{figure}
\includegraphics[scale=0.5]{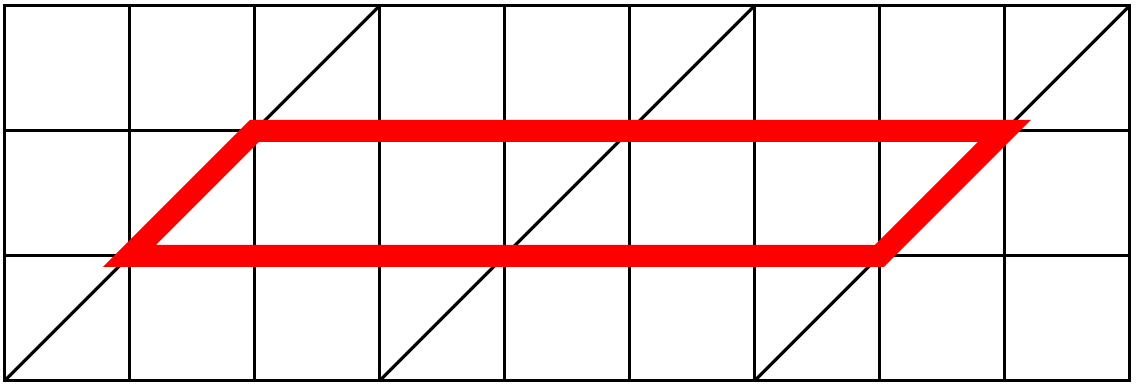}
\caption{Polygonal decomposition of cusp corresponding to braid axis.  A fundamental region consists of four triangles and $2(p-3)$ quads.  The example shown here is $p=5$. The black diagonal is the preimage of a meridian.}
\label{fig:cusp-triang}
\end{figure}

\begin{proof}
Consider the standard diagram of $W(p,1)$ in a projection plane, which $B$ intersects in two points.
Obtain an ideal polyhedral decomposition as follows.  First, for every
crossing of the $W(p,1)$ diagram, take an ideal edge, the
\emph{crossing arc}, running from the knot strand at the top of the
crossing to the knot strand at the bottom.  This subdivides the
projection plane into two triangles and $p-3$ quadrilaterals that
correspond to the regions of the link projection.  This is shown in
Figure~\ref{fig:polyhedra} (left) when $p=5$.  In the figure, note
that the four dotted red edges shown at each crossing are homotopic to
the crossing arc.

\begin{figure}
  \includegraphics[scale=0.55]{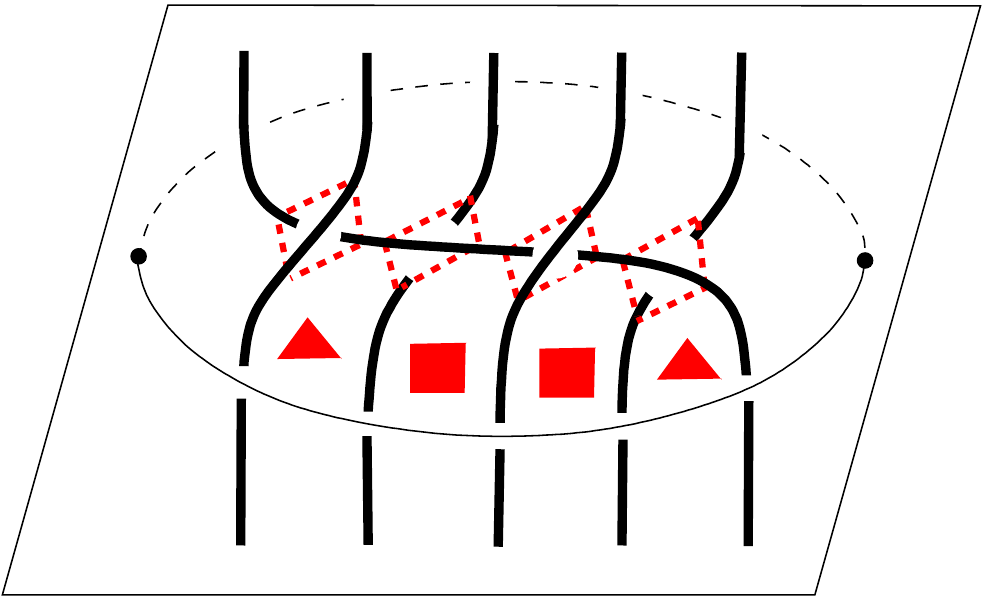}
  \includegraphics[scale=0.12]{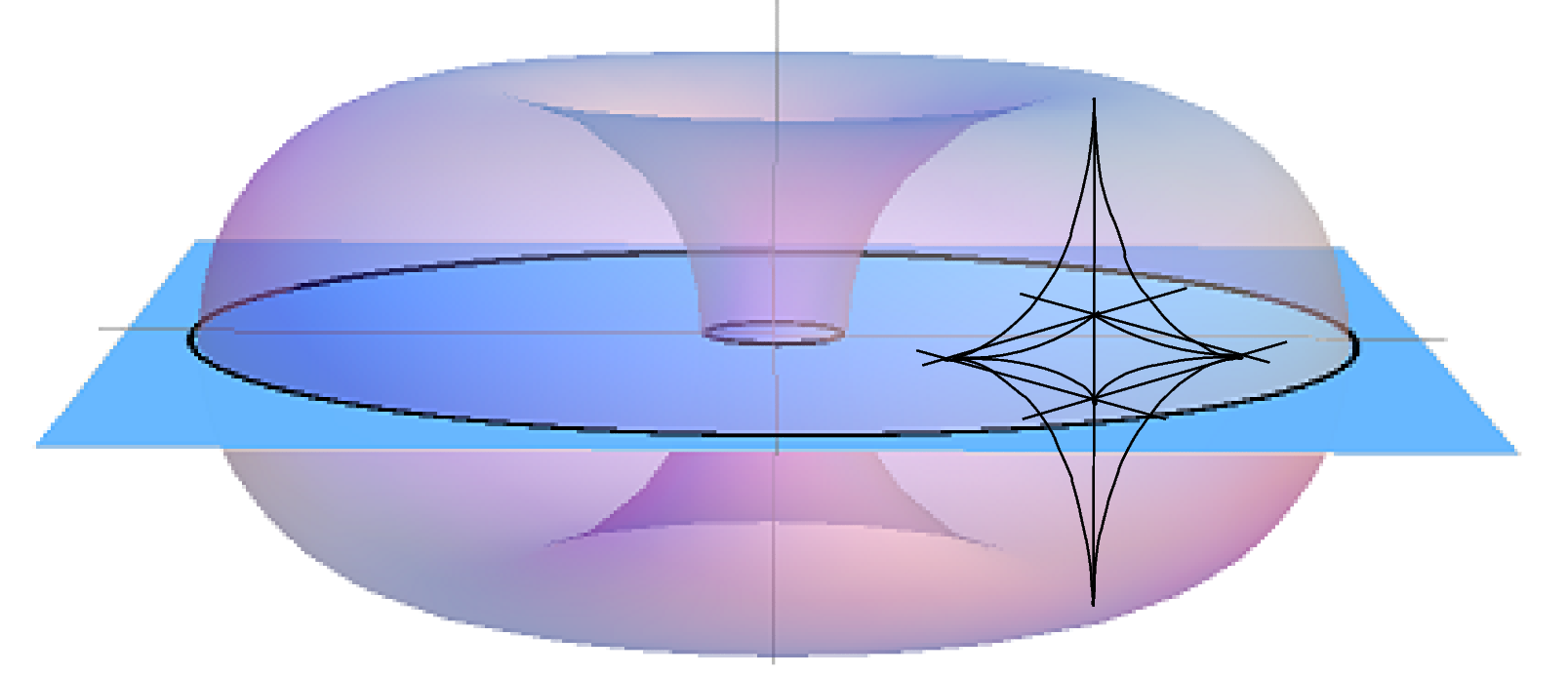}
  \caption{Left: Dividing projection plane into triangles and quadrilaterals.  Right:  Coning to ideal octahedra and tetrahedra, figure modified from \cite{torusfig}.} 
  \label{fig:polyhedra}
\end{figure}

Now for each quadrilateral on the projection plane, add four edges
between ideal vertices above the projection plane and four below, as
follows.  Those edges above the projection plane run vertically from
the strand of $W(p,1)$ corresponding to an ideal vertex of the
quadrilateral to the braid axis $B$.  Those edges below the projection
plane also run from strands of $W(p,1)$ corresponding to ideal
vertices of the quadrilateral, only now they run below the projection
plane to $B$. These edges bound eight triangles, as follows.  Four of
the triangles lie above the projection plane, with two sides running
from a strand of $W(p,1)$ to $B$ and the third in the projection
plane, connecting two vertices of the quadrilateral.
The other four lie below, again each with two edges running from
strands of $W(p,1)$ to $B$ and one edge on the quadrilateral.  The
eight triangles together bound a single octahedron.  This is shown in
Figure \ref{fig:polyhedra} (right).  Note there are $p-3$ such
octahedra coming from the $p-3$ quadrilaterals on the projection
plane.

As for the tetrahedra, these come from the triangular regions on the projection plane.  As above, draw three ideal edges above the projection plane and three below.  Each ideal edge runs from a strand of $W(p,1)$ corresponding to an ideal vertex of the triangle.  For each ideal vertex, one edge runs above the projection plane to $B$ and the other runs below to $B$.  Again we form six ideal triangles per triangular region on the projection plane.  This triangular region along with the ideal triangles above the projection plane bounds one of the four tetrahedra.  The triangular region along with ideal triangles below the projection plane bounds another.  There are two more coming from the ideal triangles above and below the projection plane for the other region.

There are also two exterior regions of the diagram, which meet $B$. We do not add tetrahedra or octahedra to these regions, because faces of tetrahedra already encountered above will be glued in these two regions. This can be seen as follows. Consider the region meeting $B$ on the left side of the diagram in Figure~\ref{fig:polyhedra} (left). In this region, there is only one edge from $W(p,1)$ to $W(p,1)$. On either side of this edge, add an edge connecting $W(p,1)$ to $B$, forming a triangle, which is sketched in Figure~\ref{fig:GluingPolyhedra}, left. The triangle is labeled $T$. It lies in the projection plane, with one edge labeled $1$, one labeled $2$, and the third edge dotted. 

\begin{figure}
  \import{figures/}{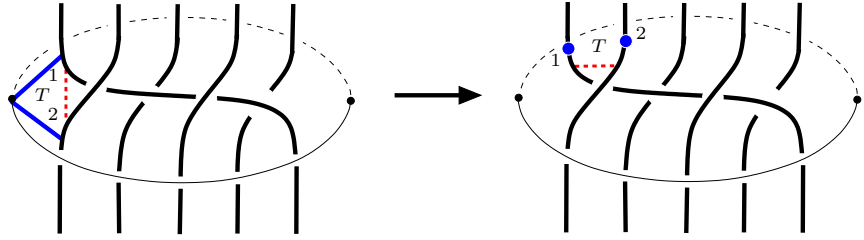}
  \caption{A triangular face $T$ in the exterior region is isotopic to a face of the tetrahedron obtained from the first triangle of Figure~\ref{fig:polyhedra} (left).}
  \label{fig:GluingPolyhedra}
\end{figure}

Note that the two edges labeled 1 and 2 of the triangle $T$ have one
of their end points on $B$ and the other on the strand of $W(p,1)$
which connects top to bottom as the braid closes up, on the far left
side of Figure \ref{fig:GluingPolyhedra}. Hence they are isotopic.
Thus the triangle $T$ completely fills the region of the projection
plane on the left of the diagram.  However, note if we isotope the two
edges from $W(p,1)$ to $B$ to be vertical, and slide the one on top
slightly past the crossing, we see that the triangle is isotopic to a
face of the tetrahedron coming from the top left of the
diagram. Similarly, the triangle is isotopic to a face of the
tetrahedron on the bottom left. Hence these two tetrahedra are glued,
top to bottom, along these triangular faces. Similarly, tetrahedra on
top and bottom on the right are glued along an exterior triangular
region in the projection plane.

Now we claim the collection of tetrahedra and octahedra glue to give
the link complement, and hence form the claimed polyhedral
decomposition.  Note that tetrahedra are glued in pairs across the
projection plane in the triangular regions of the diagram.  In
addition, as noted above, on either side of the diagram, one
triangular face of a tetrahedron above the projection plane with two
edges running to $B$ is identified to one triangular face of a
tetrahedron below with two edges running to $B$. The identification
maps both triangles to one in the exterior region of the diagram, as
shown in Figure~\ref{fig:GluingPolyhedra}. All other triangular and
quadrilateral faces are identified by obvious homotopies of the edges
and faces. This concludes the proof that these tetrahedra and
octahedra form the desired polyhedral decomposition.

Finally, to see that the cusp cross section of $B$ meets the polyhedra
as claimed, we need to step through the gluings of the portions of
polyhedra meeting $B$.  As noted above, where $B$ meets the projection
plane, there is a single triangular face $T$ of two tetrahedra, as in
Figure~\ref{fig:GluingPolyhedra}. 
The two edges of the triangle $T$ labeled 1 and 2 are isotopic in $S^3-(W(p, 1)\cup B)$, where the isotopy takes the ideal vertex of edge 1 on $W(p,1)$ around the braid closure to the ideal vertex of edge 2 on $W(p,1)$.  The
other ideal vertex of edge 1 follows a meridian of $B$ under this isotopy.
Hence a regular neighborhood of the ideal vertex of $T$ lying on $B$ traces an entire meridian of $B$. Thus a meridian of $B$ is given by the intersection of exactly one face (namely $T$) of one of the ideal tetrahedra
with a cusp neighborhood of $B$.
Now, two tetrahedra, one from above the plane of projection, and one
from below, are glued along that face.  The other two faces of the
tetrahedron above the projection plane are glued to two distinct sides
of the octahedron directly adjacent, above the projection plane. The
remaining two sides of this octahedron above the projection plane are
glued to two distinct sides of the next adjacent octahedron, above the
projection plane, and so on, until we meet the tetrahedron above the
projection plane on the opposite end of $W(p,1)$, which is glued below
the projection plane. Now following the same arguments, we see the
triangles and quadrilaterals repeated below the projection plane,
until we meet up with the original tetrahedron. Hence the cusp shape
is as shown in Figure \ref{fig:cusp-triang}.
\end{proof}

\begin{corollary}\label{cor:upper-bound}
For $p\geq 3$, the volume of $W(p,q)$ is
less than $(4\vtet + (p-3)\,\voct)\, q$.
\end{corollary}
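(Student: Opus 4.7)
The plan is to combine Lemma~\ref{lemma:polyhedra} with three standard ingredients: the fact that the regular ideal tetrahedron (resp.\ octahedron) maximizes hyperbolic volume among all ideal tetrahedra (resp.\ octahedra) in $\HH^3$; multiplicativity of hyperbolic volume under finite covers; and strict decrease of volume under nontrivial Dehn filling due to Thurston.

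First I would apply Lemma~\ref{lemma:polyhedra} to $W(p,1)\cup B$, yielding an ideal polyhedral decomposition of $S^3-(W(p,1)\cup B)$ into four ideal tetrahedra and $p-3$ ideal octahedra. Straightening each polyhedron in a developing image gives geodesic ideal polyhedra whose (signed) volumes sum to $\vol(S^3-(W(p,1)\cup B))$. Bounding each summand by the volume of its regular counterpart yields
\[
\vol(S^3-(W(p,1)\cup B))\;\leq\; 4\vtet + (p-3)\,\voct.
\]
Next, as observed just before Lemma~\ref{lemma:polyhedra}, $S^3-(W(p,q)\cup B)$ is a $q$-fold cyclic cover of $S^3-(W(p,1)\cup B)$, so multiplicativity of volume under covers gives
\[
\vol(S^3-(W(p,q)\cup B))\;=\; q\,\vol(S^3-(W(p,1)\cup B))\;\leq\; q\bigl(4\vtet + (p-3)\,\voct\bigr).
\]

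Finally, $S^3-W(p,q)$ is recovered from $S^3-(W(p,q)\cup B)$ by Dehn filling the braid-axis cusp along its meridian. For $p\geq 3$ and $q\geq 2$, $W(p,q)$ is a prime alternating link whose reduced diagram has no nugatory crossings and is not a $(2,n)$-torus link, so $S^3-W(p,q)$ is hyperbolic by Menasco's theorem, and Thurston's hyperbolic Dehn surgery theorem gives the strict inequality
\[
\vol(W(p,q))\;<\;\vol(S^3-(W(p,q)\cup B))\;\leq\; q\bigl(4\vtet + (p-3)\,\voct\bigr).
\]
The main obstacle I anticipate is forcing strictness $<$ rather than $\leq$: the polyhedral/covering steps alone produce only the weak bound, so the Dehn-filling step is essential, and it in turn requires hyperbolicity of the weaving link complement, which is why one needs to invoke Menasco's theorem before applying Thurston's volume-decrease result.
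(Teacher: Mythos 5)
Your overall route is the same as the paper's: use the polyhedral decomposition of Lemma~\ref{lemma:polyhedra}, bound each ideal tetrahedron by $\vtet$ and each ideal octahedron by $\voct$ (via straightening), pass to the $q$--fold cover, and finish with strict volume decrease under Dehn filling of the braid axis. That skeleton is correct, and you are right that the Dehn-filling step is what supplies strictness.

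There is, however, one genuine gap: every step of your argument presupposes that the \emph{augmented} manifold $S^3-(W(p,q)\cup B)$ is hyperbolic --- you need this to speak of its volume, to straighten the polyhedra in a developing map, to invoke multiplicativity of volume under covers, and to apply the strict volume decrease under Dehn filling. Menasco's theorem, which you invoke, only gives hyperbolicity of $S^3-W(p,q)$, i.e.\ of the \emph{filled} manifold; it says nothing about the link with the braid axis adjoined, and adding a component to a hyperbolic link can in principle destroy hyperbolicity. The paper closes this gap by first applying Menasco to $W(p,q_0)$ for some large $q_0$, then citing Adams' theorem on augmented links \cite[Theorem~2.1]{adams:genaug} to conclude that $S^3-(W(p,q_0)\cup B)$ is hyperbolic, and finally using the covering $S^3-(W(p,q_0)\cup B)\to S^3-(W(p,1)\cup B)$ to deduce hyperbolicity of the base and hence of all covers $S^3-(W(p,q)\cup B)$. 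Note that one cannot shortcut this at $q=1$: the link $W(p,1)$ is an unknot, so there is no direct appeal to Menasco for the base manifold; the detour through large $q_0$ and Adams' result is genuinely needed.
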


\begin{proof}
For any positive integer $q$, and any $p\geq 3$, we claim that $S^3-(W(p,q)\cup B)$ is hyperbolic. For fixed $p\geq 3$ and $q_0$ large, say $q_0\geq 6$, the diagram of $W(p,q_0)$ is a reduced diagram of a prime alternating link that is not a 2-braid, so is hyperbolic~\cite{menasco}. Then recent work of Adams implies that when we remove the braid axis from the complement, the resulting link remains hyperbolic~\cite[Theorem~2.1]{adams:genaug}. Since $S^3-(W(p,q_0)\cup B)$ is a finite cover of $S^3-(W(p,1)\cup B)$, the latter manifold is also hyperbolic. Hence for any positive integer $q$, the cover $S^3-(W(p,q)\cup B)$ will also be hyperbolic.

Now, the hyperbolic manifold $S^3-(W(p,q)\cup B)$ has a decomposition
into ideal tetrahedra and octahedra.  The maximal volume of a
hyperbolic ideal tetrahedron is $\vtet$, the volume of a regular ideal
tetrahedron. The maximal volume of a hyperbolic ideal octahedron is at
most $\voct$, the volume of a regular ideal octahedron. The result now
follows immediately from the first part of
Lemma~\ref{lemma:polyhedra}, and the fact that volume strictly
decreases under Dehn filling \cite{thurston:notes}.
\end{proof}

\subsection{Weaving knots with three strands}\label{3strand}
The case when $p=3$ is particularly nice geometrically, and so we treat it separately in this section.

\begin{theorem}\label{thm:3-strand}
If $p=3$ then the upper bound in Corollary~\ref{cor:upper-bound} is achieved exactly by the volume of $S^3-(W(3,q)\cup B),$ where $B$ denotes the braid axis. That is, 
$$ \vol(W(3,q)\cup B) = 4\,q\,\vtet.$$
\end{theorem}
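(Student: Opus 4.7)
My plan is to show that for $p=3$ the four tetrahedra produced by Lemma~\ref{lemma:polyhedra} are realized as \emph{regular} ideal tetrahedra in the complete hyperbolic structure on $S^3-(W(3,1)\cup B)$, so that the polyhedral decomposition saturates the upper bound tetrahedron by tetrahedron. Once this is established for $q=1$, the statement for arbitrary $q$ follows at once from the observation made just before Lemma~\ref{lemma:polyhedra}: $S^3-(W(3,q)\cup B)$ is a $q$-fold cyclic cover of $S^3-(W(3,1)\cup B)$, so volume is multiplied by $q$, giving $\vol(W(3,q)\cup B)=4q\vtet$.

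To produce the regular ideal structure, I would assign to each of the four tetrahedra from Lemma~\ref{lemma:polyhedra} all dihedral angles equal to $\pi/3$, and then verify Thurston's two conditions for this assignment to define the complete hyperbolic structure: (i) at each edge of the decomposition the incident dihedral angles sum to $2\pi$, and (ii) at each cusp the induced Euclidean similarity structure has trivial dilation component, i.e.\ the cusp cross-section is a Euclidean torus. Because every dihedral angle is $\pi/3$, condition (i) is equivalent to checking that each edge class of the decomposition has valence exactly six. By Mostow--Prasad rigidity, once (i) and (ii) are verified, this regular structure must be \emph{the} hyperbolic structure, giving volume exactly $4\vtet$ on $S^3-(W(3,1)\cup B)$.

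Condition (ii) is especially clean at the cusp of $B$: the fundamental domain described in Lemma~\ref{lemma:polyhedra} for $p=3$ consists of four triangles and no quadrilaterals (since $p-3=0$), which under the regular assignment become four equilateral Euclidean triangles tiling the cusp torus in the pattern of Figure~\ref{fig:cusp-triang}. From that picture one reads off directly that the two generators of the cusp subgroup act by Euclidean translations. A parallel verification is needed at the $W(3,1)$ cusp, using the face pairings that identify the strands of the braid closure.

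The hard part will be condition (i). One must chase the face identifications from the proof of Lemma~\ref{lemma:polyhedra} --- including the identifications across the two exterior triangular faces described in the discussion of Figure~\ref{fig:GluingPolyhedra}, which glue top tetrahedra to bottom tetrahedra --- and for each edge class (the crossing arcs of $W(3,1)$ and the vertical edges from $W(3,1)$ to $B$) list exactly the tetrahedra meeting it. The bookkeeping is finite and small but genuinely requires case checking; once it is settled, the rest of the argument is automatic.
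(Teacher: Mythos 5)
Your proposal is correct and follows essentially the same route as the paper: reduce to $q=1$ via the $q$-fold cover, declare the four tetrahedra of Lemma~\ref{lemma:polyhedra} regular ideal, verify the gluing equations by checking that every edge class is $6$-valent, and verify completeness from the fact that the cusp cross-sections are tiled by equilateral triangles. The paper carries out the edge-valence bookkeeping you defer by exhibiting the explicit edge and face identifications in Figure~\ref{fig:3-strand}, and its completeness argument (all cusp triangles equilateral, so the holonomy has trivial dilation) handles both cusps at once rather than cusp by cusp.
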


\begin{proof}
Since the complement of $W(3, q)\cup B$ in $S^3$ is a $q$--fold cover of the complement of $W(3, 1) \cup B$, it is enough to prove the statement for $q=1$. 

We proceed as in the proof of Lemma \ref{lemma:polyhedra}. If $p=3$, then the projection plane of $W(3,1)$ is divided into two triangles; see Figure \ref{fig:3-strand}. This gives four tetrahedra, two each on the top and bottom.  The edges and faces on the top tetrahedra are glued to those of the bottom tetrahedra across the projection plane for the same reason as in the proof Lemma~\ref{lemma:polyhedra}. 

\begin{figure}[h]
  \import{figures/}{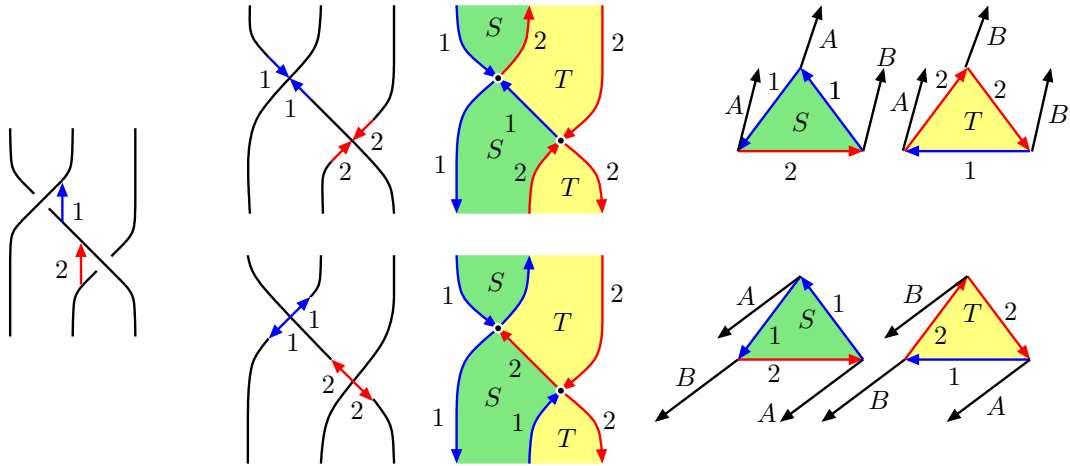}
 \caption{The tetrahedral decomposition of $S^3-(W(3,1)\cup B)$. } 
  \label{fig:3-strand}
\end{figure}

Thus the tetrahedra are glued as shown in Figure~\ref{fig:3-strand}.  The top figures indicate the top tetrahedra and the bottom figures indicate the bottom tetrahedra. The crossing edges are labelled by numbers and the edges from the knot to the braid axis are labelled by letters. The two triangles in the projection plane are labelled $S$ and $T$. Edges and faces are glued as shown. 

In this case, all edges of the polyhedra
are $6$--valent.  We set all tetrahedra to be regular ideal tetrahedra, and obtain a solution to the gluing equations.
Since all links of tetrahedra are equilateral triangles, they are all similar, and all edges of any triangle are scaled by the same factor under dilations. Hence, the holonomy for every loop in the cusp has to expand and contract by the same factor (i.e.\ it is scaled by unity), and so it is a Euclidean isometry. This implies that the regular ideal tetrahedra are also a solution to the completeness equations. Thus this is a geometric triangulation giving  the complete structure with volume $4 \vtet$. 
\end{proof}

\begin{remark}
Since the volumes of $S^3-(W(3,q)\cup B)$ are integer
multiples of $\vtet$, we investigated their commensurability with the complement of the figure--8 knot, which is $W(3,2)$. Using SnapPy~\cite{snappy}, we verified that $S^3-(W(3,2)\cup B)$ is
a 4--fold cover of $S^3-W(3,2)$.  Thus, the figure--8 knot complement is covered by its braid complement with the axis removed!  Some other interesting links also appear in this commensurablity class, as illustrated in
Figure~\ref{fig:3-strand-comm}.
\end{remark}

\begin{figure}
  \import{figures/}{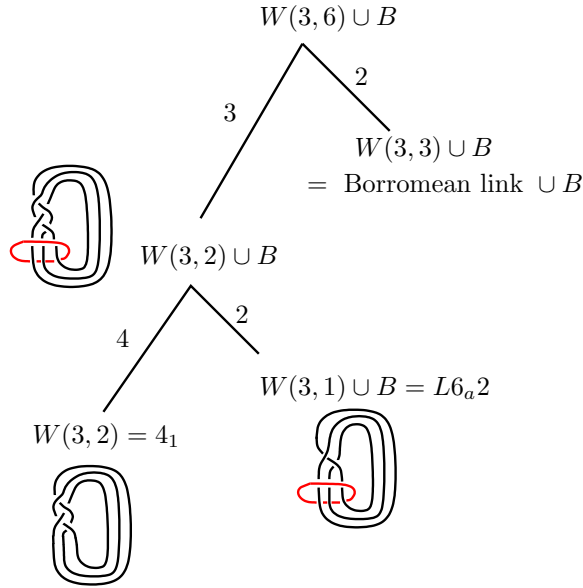}
\caption{The complement of the figure--8 knot and its braid axis, $S^3-(W(3,2)\cup B)$, is a 4--fold cover of the figure--8 knot complement, $S^3-W(3,2)$.}
\label{fig:3-strand-comm}
\end{figure}

\section{Angle structures and lower volume bounds}\label{sec:lowerbounds}

In this section we find lower bounds on volumes of weaving knots. To do so, we use angle structures on the manifolds $S^3-(W(p,q)\cup B)$.

\begin{define}\label{def:AngleStruct}
Given an ideal triangulation $\{\Delta_i\}$ of a 3--manifold, an \emph{angle structure} is a choice of three angles $(x_i, y_i, z_i) \in (0, \pi)^3$ for each tetrahedron $\Delta_i$, assigned to three edges of $\Delta_i$ meeting in a vertex, such that
\begin{enumerate}
\item\label{item:sumtopi} $x_i + y_i + z_i = \pi$;
\item\label{item:oppedge} the edge opposite the edge assigned angle $x_i$ in $\Delta_i$ is also assigned angle $x_i$, and similarly for $y_i$ and $z_i$; and
\item angles about any edge add to $2\pi$.
\end{enumerate}
\end{define}

For any tetrahedron $\Delta_i$ and angle assignment $(x_i, y_i, z_i)$ satisfying \eqref{item:sumtopi} and \eqref{item:oppedge} above, there exists a unique hyperbolic ideal tetrahedron with the same dihedral angles.  
The volume of this hyperbolic ideal tetrahedron can be computed from $(x_i, y_i, z_i)$.  We do not need the exact formula for our purposes.  However, given an angle structure on a triangulation $\{\Delta_i\}$, we can compute the corresponding volume by summing all volumes of ideal tetrahedra with that angle assignment.

\begin{lemma}\label{lemma:angle-struct}
For $p>3$, the manifold $S^3-(W(p,1)\cup B)$ admits an angle structure with volume $\voct\,(p-2)$.
\end{lemma}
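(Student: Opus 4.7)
The plan is to build an explicit angle structure by refining the polyhedral decomposition $\mathcal{P}$ from Lemma \ref{lemma:polyhedra}. First I subdivide each of the $p-3$ ideal octahedra into four ideal tetrahedra meeting along a central axis, this axis being a new ideal edge connecting the two ideal vertices on the braid axis $B$ (above and below the projection plane). This produces an ideal triangulation $\mathcal{T}$ of $S^3 - (W(p,1) \cup B)$ containing $4(p-2)$ tetrahedra: $4(p-3)$ from the subdivided octahedra together with the $4$ original corner tetrahedra coming from the triangular regions of the projection.

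Next I assign every tetrahedron of $\mathcal{T}$ the dihedral angles $(\pi/2, \pi/4, \pi/4)$, specifying which pair of opposite edges receives $\pi/2$. For each tetrahedron arising from an octahedral subdivision, I place $\pi/2$ on the pair consisting of the central axis together with its opposite equator edge (a crossing arc lying in the projection plane), so that the four slant edges joining $W(p,1)$ to $B$ each receive $\pi/4$. This is exactly the dihedral angle pattern of a regular ideal octahedron split into four congruent ideal tetrahedra by a central axis. For each of the four corner tetrahedra, I select the $\pi/2$ pair to match the pattern of the neighbouring polyhedra under the face identifications in the exterior regions described in Lemma \ref{lemma:polyhedra}.

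Conditions (1) and (2) of Definition \ref{def:AngleStruct} are immediate. For condition (3), the edges fall into three types: central axes inside subdivided octahedra, crossing arcs in the projection plane, and slant edges from $W(p,1)$ to $B$. Each central axis has four tetrahedra meeting at angle $\pi/2$, summing to $2\pi$. For the crossing arcs and slant edges, the angle sums are verified case by case; the cusp triangulation of $B$ shown in Figure \ref{fig:cusp-triang} --- four triangles (from the corner tetrahedra) and $2(p-3)$ quadrilaterals (from the octahedra, each subdivided into four right isosceles triangles by the central axis) --- provides a useful check, since each vertex of this cusp triangulation corresponds to an edge of $\mathcal{T}$ and the sum of the triangle corner angles at a vertex equals the angle sum for the corresponding edge. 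Finally, each tetrahedron with angles $(\pi/2, \pi/4, \pi/4)$ has volume $\Lambda(\pi/2) + 2\Lambda(\pi/4) = 2\Lambda(\pi/4) = \voct/4$, so summing over the $4(p-2)$ tetrahedra yields total volume $(p-2)\voct$.

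The main obstacle will be the bookkeeping for condition (3) at the edges adjacent to the corner tetrahedra, where the uniform octahedral pattern breaks down. The labelling of the corner tetrahedra is forced by the face identifications across the two exterior regions of the diagram, and one must verify --- with the aid of Figure \ref{fig:cusp-triang} for the cusp at $B$ and an analogous analysis at the cusp of $W(p,1)$ --- that all edge angle sums equal $2\pi$.
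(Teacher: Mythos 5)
Your construction is exactly the one the paper uses: stellate each of the $p-3$ octahedra along the vertical axis joining the two ideal vertices on $B$, give every resulting tetrahedron and every corner tetrahedron angles $(\pi/2,\pi/4,\pi/4)$, and observe that each such tetrahedron has volume $\voct/4$, so the total is $(p-2)\voct$. The count $4(p-2)$, the placement of $\pi/2$ on the axis/equator pair in the stellated octahedra, and the volume computation are all correct.

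The issue is that the only substantive step --- verifying condition (3) at the edges adjacent to the four corner tetrahedra --- is announced (``verified case by case,'' ``one must verify'') but never actually carried out, and this is precisely where the paper spends its effort (Figures 5 and 6 and the surrounding discussion). Concretely, what has to be checked is: (i) the crossing arc at the outermost crossing on each side is glued to four edges of corner tetrahedra, and these do \emph{not} all carry the same angle --- in each of $T_1$ and $T_1'$ it appears once opposite a $\pi/2$-edge (hence with angle $\pi/2$) and once with angle $\pi/4$ --- plus one octahedron edge with angle $\pi/2$, giving $2\pi$; (ii) the crossing arc at the second crossing appears twice in tetrahedra with angle $\pi/4$ and three times in octahedra with angle $\pi/2$; and (iii) the $W(p,1)$--to--$B$ edges meeting tetrahedra come in two types (three octahedra plus one tetrahedron edge of angle $\pi/2$, or two octahedra plus four tetrahedron edges of angle $\pi/4$). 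Your proposed shortcut via the cusp triangulation of Figure~\ref{fig:cusp-triang} only certifies the edges with an endpoint on $B$, and even there one must account for a single edge appearing as several vertices of the cusp picture; the crossing arcs require tracking the face identifications directly (the cusp triangulation of $W(p,1)$ is not drawn anywhere, so ``an analogous analysis'' there is not free). Since all of these sums do come out to $2\pi$, the gap is completable, but as written the proof defers exactly the part that needs proving; note also that the choice of which corner-tetrahedron edge receives $\pi/2$ is forced by these computations (the paper pins it down by requiring the two triangles in the $B$-cusp to glue into a square along the meridian), not merely ``to match the neighbouring polyhedra.''
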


\begin{proof}
We will take our ideal polyhedral decomposition of $S^3-(W(p,1)\cup B)$ from Lemma~\ref{lemma:polyhedra} and turn it into an ideal triangulation by stellating the octahedra, splitting each of them into four ideal tetrahedra. More precisely, this is done by adding an edge running from the ideal vertex on the braid axis above the plane of projection, through the plane of projection to the ideal vertex on the braid axis below the plane of projection. Using this ideal edge, the octahedron is split into four tetrahedra.

Now obtain an angle structure on this triangulation as follows. First, 
assign to each edge in an octahedron (edges that existed before stellating) the angle $\pi/2$.  As for the four tetrahedra, assign angles $\pi/4$, $\pi/4$, and $\pi/2$ to each, such that pairs of the tetrahedra glue into squares in the cusp neighborhood of the braid axis. 
See Figure~\ref{fig:cusp-triang}. When we stellate, assign angle structures to the four new tetrahedra coming from the octahedra in the obvious way, namely, on each new tetrahedron the ideal edge through the plane of projection is given angle $\pi/2$, and the other two edges meeting that edge in an ideal vertex are labeled $\pi/4$. 
With these angles, items (1) and (2) from Definition~\ref{def:AngleStruct} are satisfied for the tetrahedra. We need to check item (3). 

Note the angle sum around each new edge in the stellated octahedra is $2\pi$, so we only need to consider edges coming from the original tetrahedra and octahedra of the polyhedral decomposition, and the angle sums around them. Consider first the ideal edges with one endpoint on $W(p,1)$ and one on the braid axis.  These correspond to vertices of the polygonal decomposition of the braid axis illustrated in Figure~\ref{fig:cusp-triang}. Note that many of these edges meet exactly four ideal octahedra, hence the angle sum around them is $2\pi$. Any such edge that meets an original tetrahedron either meets three other ideal octahedra and the angle in the tetrahedron is $\pi/2$, so the total angle sum is $2\pi$, or it is identified to four edges of tetrahedra with angle $\pi/4$, and two octahedra. Hence the angle sum around it is $2\pi$.

Finally consider the angle sum around edges which run from $W(p,1)$ to $W(p,1)$.  These arise from crossings in the diagram of $W(p,1)$. The first two crossings on the left side and the last two crossings on the right side give rise to ideal edges bordering (some) tetrahedra.  The others (for $p>4$) border only octahedra, and exactly four such octahedra, hence the angle sum for those is $2\pi$.  So we need only consider the edges arising from two crossings on the far left and two crossings on the far right.  We consider those on the far left; the argument for the far right is identical.

Label the edge at the first crossing on the left $1$, and label that of the second $2$. See Figure~\ref{fig:edges12}. The two tetrahedra arising on the far left have edges glued as shown on the left of Figure~\ref{fig:edge-labels-tet}, and the adjacent octahedron has edges glued as on the right of that figure. We label the tetrahedra $T_1$ and $T_1'$.
    
\begin{figure}
  \import{figures/}{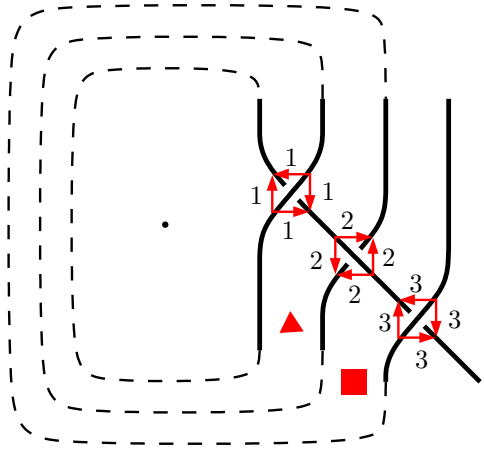}
\caption{Each edge $1$ and $2$ is a part of two tetrahedra arising from
    the triangle, and an octahedron arising from the square as shown in
    Figure \ref{fig:edge-labels-tet}. The braid axis is shown in the center.
  }
  \label{fig:edges12}
\end{figure}

\begin{figure}
  \import{figures/}{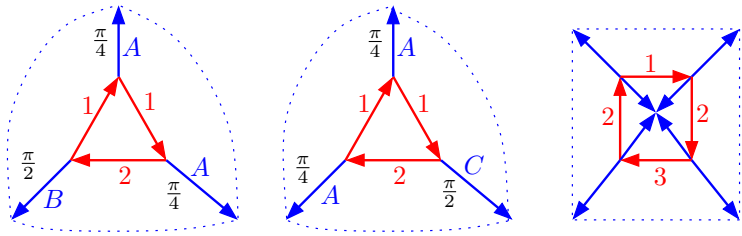}
  \caption{Edges are glued as shown in figure. From left to right, shown are tetrahedra $T_1$, $T_1'$, and adjacent octahedron.}
  \label{fig:edge-labels-tet}
\end{figure}

Note that the edge labeled $1$ in the figure is glued four times in
tetrahedra, twice in $T_1$ and twice in $T_1'$, and once in an
octahedron.  However, note that in the tetrahedra it is assigned
different angle measurements.  In particular, in $T_1$, the edge
labeled $1$, which is opposite the edge labeled $b$, must have
dihedral angle $\pi/2$, because that is the angle on the edge labeled
$b$.  The other edge of $T_1$ labeled $1$ must have dihedral angle
$\pi/4$.  Similarly for $T_1'$. Thus the angle sum around the edge
labeled $1$ is $2\pi$.

In both $T_1$ and $T_1'$, the edge labeled $2$ has angle $\pi/4$. Since the edge labeled $2$ is also glued to two edges in one octahedron, and one edge in another, the total angle sum around that edge is also $2\pi$. Hence this gives an angle sum as claimed. This concludes the proof that $S^3-(W(p,1)\cup B)$ admits an angle structure.

The volume estimate comes from the fact that a regular ideal octahedron has volume $\voct$.  Moreover, four ideal tetrahedra, each with angles $\pi/2, \pi/4, \pi/4$, can be glued to give an ideal octahedron, hence each such tetrahedron has volume $\voct/4$.  We have $p-3$ octahedra and four such tetrahedra, and hence the corresponding volume is $(p-2)\,\voct$.  
\end{proof}

\begin{lemma}\label{lemma:stellatedoct}
Let $P$ be an ideal polyhedron obtained by coning to $\pm\infty$ from any ideal quadrilateral in the projection plane.  Then for any angle structure on $P$, the volume of that angle structure $\vol(P)$ satisfies
$\vol(P)\leq \voct$, the volume of the regular ideal octahedron.
\end{lemma}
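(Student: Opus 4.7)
The plan is a symmetrization argument using concavity of the volume functional on the space of angle structures of $P$, in the spirit of Rivin~\cite{rivin}. First, I would stellate $P$ into four ideal tetrahedra $T_1,\dots,T_4$, cyclically arranged around a new ideal edge $e$ that joins the two cone points $v_\pm$ through the quadrilateral. In each $T_i$ I would label the three pairs of opposite dihedral angles by $(\alpha_i,\beta_i,\gamma_i)\in(0,\pi)^3$, with $\alpha_i$ the pair consisting of $e$ and the opposite equatorial edge of $P$. An angle structure on $P$ is then a choice of these twelve numbers satisfying $\alpha_i+\beta_i+\gamma_i=\pi$ for each $i$ together with the single internal-edge relation $\sum_{i=1}^{4}\alpha_i=2\pi$ around $e$; these linear equations cut out a bounded convex polytope $\A$.

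Next, I would write the volume as $\vol(P)=\sum_i\bigl(\Lambda(\alpha_i)+\Lambda(\beta_i)+\Lambda(\gamma_i)\bigr)$, where $\Lambda$ is the Lobachevsky function. The standard Hessian computation (the volume of a single ideal tetrahedron is strictly concave as a function of its angles on its angle simplex) then shows that $\vol$ is strictly concave on $\A$ and extends continuously to the closure $\overline{\A}$.

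I would then apply a symmetrization argument. The combinatorial octahedron carries an action of $G=\Z/4\times\Z/2$, generated by the cyclic rotation that permutes the $T_i$ and by the reflection $v_+\leftrightarrow v_-$. In coordinates these act on $\A$ by cyclic permutation of the triples $(\alpha_i,\beta_i,\gamma_i)$ and by the swap $\beta_i\leftrightarrow\gamma_i$ in each $T_i$, respectively, and both preserve $\vol$ and $\A$. For any $p\in\A$, Jensen's inequality gives $\vol(p)\leq\vol(p_G)$, where $p_G=\tfrac{1}{|G|}\sum_{g\in G}g\cdot p\in\A$ is the $G$-average. A $G$-fixed point must satisfy $\alpha_i=\alpha$, $\beta_i=\gamma_i=\beta$ with $\alpha+2\beta=\pi$ and $4\alpha=2\pi$, forcing the unique solution $\alpha=\pi/2$, $\beta=\pi/4$, which realizes $P$ as a regular ideal octahedron (every dihedral angle of $P$ equals $\pi/2$: the equatorial angles are $\alpha_i=\pi/2$ and the lateral angles are $\beta_{i-1}+\gamma_i=\pi/2$). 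Its volume is $4\bigl(\Lambda(\pi/2)+2\Lambda(\pi/4)\bigr)=8\Lambda(\pi/4)=\voct$, yielding $\vol(P)\leq\voct$.

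The main obstacle is the bookkeeping of edges and their angle constraints: verifying that $e$ really is the unique internal edge of the stellation (so that only $\sum_i\alpha_i=2\pi$ is imposed, while the equatorial and lateral edges of $P$ carry no intra-$P$ edge relation of their own), and that the reflection $v_+\leftrightarrow v_-$ genuinely induces $\beta_i\leftrightarrow\gamma_i$ in each $T_i$. Once $\A$, the $G$-action, and its unique fixed point have been identified, concavity of the Lobachevsky sum and a one-line application of Jensen's inequality finish the proof.
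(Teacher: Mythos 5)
Your proof is correct, but it takes a genuinely different route from the paper's. The paper argues by contradiction via Rivin's machinery: the given angle structure induces an exterior dihedral angle assignment $\Delta$ on $P$, so $A(P,\Delta)$ is nonempty; by Rivin's Theorems 6.13 and 6.16 the volume on $A(P,\Delta)$ is maximized precisely at the unique complete hyperbolic structure realizing $\Delta$, and among genuine hyperbolic ideal octahedra the regular one has the largest volume (Ratcliffe, Theorems 10.4.7--10.4.8); hence no angle structure can exceed $\voct$. You instead work directly in coordinates on the stellation: the angle-structure space is the convex polytope cut out by $\alpha_i+\beta_i+\gamma_i=\pi$ together with the single internal relation $\sum_i\alpha_i=2\pi$ around the central edge, the volume $\sum\Lambda$ is concave there, and averaging over the combinatorial symmetry group pushes any point up to the unique fixed point $\alpha=\pi/2$, $\beta=\gamma=\pi/4$, whose volume is $8\Lambda(\pi/4)=\voct$. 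Your bookkeeping worries check out: the central edge is the only edge interior to $P$ (every other edge of $P$ is shared with adjacent polyhedra of the global triangulation, so carries no intra-$P$ relation, which is consistent with how the lemma is applied in Lemma~\ref{lemma:crit-at-interior}), and the swap of the two cone points does exchange the two pairs of opposite lateral edges within each $T_i$. Note that plain (not strict) concavity of the volume of a single ideal tetrahedron on its angle simplex already suffices for the inequality; strictness additionally yields that equality holds only at the regular point. The trade-off: the paper's proof is shorter but imports two nontrivial external results, while yours is essentially self-contained modulo the standard concavity fact (which the paper cites elsewhere anyway), exhibits the maximizer explicitly, and generalizes readily to cones over other polygons.
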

\begin{proof}
Suppose the volume for some angle structure is strictly greater than $\voct$.  The dihedral angles on the exterior of $P$ give a dihedral angle assignment $\Delta$ to $P$, and so in the terminology of Rivin \cite{rivin}, $A(P, \Delta)$ is nonempty.  By Theorem 6.13 of that paper, there is a unique complete structure with angle assignment $\Delta$, and the proof of Theorem 6.16 of \cite{rivin} shows that the complete structure occurs exactly when the volume is maximized over $A(P, \Delta)$.  Hence the volume of our angle structure is at most the volume of the complete hyperbolic structure on $P$ with angle assignment $\Delta$.  

On the other hand, for complete hyperbolic structures on $P$, it is known that the volume is maximized in the regular case, and thus the volume is strictly less than the volume of a regular ideal octahedron.  The proof of this fact is given, for example, in Theorem 10.4.8 and the proof of Theorem 10.4.7 in \cite{ratcliffe}.  This is a contradiction.
\end{proof}

Now consider the space $\A(\P)$ of angle structures on the ideal triangulation $\P$ for $S^3-(W(p,1)\cup B)$.
Given an angle structure on a tetrahedron, there is a unique ideal tetrahedron in $\HH^3$ with the same dihedral angles, and it admits a volume. Thus, there is a volume function $\vol\co \A(\P)\to \RR$ given by summing all volumes of tetrahedra.
It is known that the volume function is concave down on the space of angle structures
(see \cite{futer-gueritaud:angles, rivin}).
If we can show that the critical point of the function $\vol$ lies in the interior of $\A(\P)$,
then work of Casson and Rivin will imply that any angle structure is a lower bound on the hyperbolic volume of the manifold \cite{rivin}. Hence, we must study the behavior of the function $\vol$ on the boundary of $\A(\P)$. From Definition~\ref{def:AngleStruct}, we see that the space of angle structures is a linear subspace of $(0,\pi)^{3n}$, where $n$ is the total number of tetrahedra in the triangulation. Thus, the boundary of $\A(\P)$ lies on the boundary of $[0,\pi]^{3n}$. 
(For an excellent exposition on angle structures and volume, see \cite{futer-gueritaud:angles}.) 

\begin{lemma}\label{lemma:crit-at-interior}
The critical point for $\vol \co \A(\P)\to \RR$ is in the interior of the space $\A$ of angle structures on $\P$.
\end{lemma}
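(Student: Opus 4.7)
The plan is to apply the standard Casson--Rivin/Futer--Gu\'eritaud strategy: since $\vol$ is strictly concave on the open convex polytope $\A(\P)$ and extends continuously to the compact polytope $\overline{\A(\P)}$, a critical point exists in $\A(\P)$ if and only if the maximum of the extension on $\overline{\A(\P)}$ is not attained on the boundary $\partial\overline{\A(\P)}$. Thus it suffices to rule out boundary maxima.

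I would argue by contradiction. Suppose the maximum is attained at some $A\in\partial\overline{\A(\P)}$; then at least one tetrahedron is degenerate at $A$, meaning some angle is $0$ or $\pi$. Exhibit a tangent vector into $\A(\P)$ by taking $W=A_0-A$, where $A_0\in\A(\P)$ is the explicit angle structure produced by Lemma~\ref{lemma:angle-struct}, whose every angle is $\pi/4$ or $\pi/2$. By convexity of $\overline{\A(\P)}$ the direction $W$ is feasible from $A$, and because every angle of $A_0$ is bounded strictly away from $0$ and $\pi$, the vector $W$ has a strictly positive component on every angle of $A$ that equals $0$ and a strictly negative component on every angle of $A$ that equals $\pi$.

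The main analytic input is the standard derivative estimate for the Lobachevsky function: writing the volume of an ideal tetrahedron as $\Lambda(\alpha)+\Lambda(\beta)+\Lambda(\gamma)$, one has $\Lambda'(\theta)=-\log|2\sin\theta|$, which tends to $+\infty$ as $\theta\to 0^+$ and to $-\infty$ as $\theta\to\pi^-$. Combined with the derivative formulas for $\vol$ on $\A(\P)$ worked out in \cite{futer-gueritaud:angles, rivin}, this shows that the one-sided directional derivative $\partial_W\vol$ at $A$ equals $+\infty$. This contradicts $A$ being a maximum of the continuous extension of $\vol$ on $\overline{\A(\P)}$, since all feasible directional derivatives at such a maximum must be nonpositive. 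Hence the maximum lies in $\A(\P)$, and strict concavity identifies it as the unique critical point.

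The main obstacle is verifying that $W$ genuinely activates every degeneration, rather than accidentally lying tangent to the stratum on which $A$ sits; this is what one really needs to cite from Futer--Gu\'eritaud to get $+\infty$ rather than a finite derivative. Here it is immediate, because every coordinate of $A_0$ is strictly interior to $(0,\pi)$, so the component of $W=A_0-A$ on each degenerate coordinate of $A$ is nonzero with the correct sign. Beyond this, no delicate new estimate is required: the result follows from the existence of the interior angle structure supplied by Lemma~\ref{lemma:angle-struct} together with the Casson--Rivin/Futer--Gu\'eritaud criterion.
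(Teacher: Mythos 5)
There is a genuine gap at the heart of your argument: the claim that the one-sided directional derivative $\partial_W\vol$ at a boundary maximizer is $+\infty$. First, the sign analysis of $\Lambda'(\theta)=-\log|2\sin\theta|$ is off: since $\sin\theta\to 0$ as $\theta\to\pi^-$, one has $\Lambda'(\theta)\to+\infty$ at \emph{both} endpoints, not $-\infty$ at $\pi$. More importantly, by the very proposition of Gu\'eritaud that is relevant here (\cite[Proposition~7.1]{gf:once-punct-torus}), a volume maximizer on the boundary can only have degenerate tetrahedra of the fully flat type $(0,0,\pi)$. For such a tetrahedron, deforming along $W=A_0-A$ moves two angles off $0$ and one angle off $\pi$ simultaneously, and the three logarithmic singularities cancel at leading order: writing $\Lambda(\theta)\approx\theta-\theta\log|2\theta|$ near $0$ and using $w_1+w_2=|w_3|$, the coefficient of $\log t$ in $\vol(A+tW)$ vanishes, leaving a \emph{finite} (positive) one-sided derivative. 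So the blow-up you rely on does not occur in the only case that survives Gu\'eritaud's proposition, and ``derivative $=+\infty$, contradiction'' does not close the argument; you have misidentified the obstacle as being about whether $W$ activates the degenerate coordinates, when the real issue is the cancellation among the activated ones. This is precisely why a boundary maximum is possible for general triangulations and why Casson--Rivin does not automatically geometrize every angled triangulation.

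The paper closes the gap differently: after invoking Gu\'eritaud's proposition to conclude that any degenerate tetrahedron at a boundary maximizer is flat and hence contributes zero volume, it compares volumes directly rather than differentiating. A flat tetrahedron among the four original ones caps the total volume at $3\vtet+(p-3)\voct$ (using Lemma~\ref{lemma:stellatedoct} to bound each stellated octahedron by $\voct$), and a flat tetrahedron inside a stellated octahedron caps it at $4\vtet+3\vtet+(p-4)\voct$; both are strictly less than the value $(p-2)\voct$ realized at the interior point of Lemma~\ref{lemma:angle-struct}, since $3\vtet<\voct$ and $7\vtet<2\voct$. Your instinct to use the explicit interior structure of Lemma~\ref{lemma:angle-struct} is the right one, but it must enter as a quantitative volume comparison, not merely as a direction in which to differentiate.
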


\begin{proof}
We will show that the volume function takes values strictly smaller on the boundary of $\A(\P)$ than at any point in the interior.  Therefore, it will follow that the maximum occurs in the interior of $\A(\P)$.

Suppose we have a point $X$ on the boundary of $\A(\P)$ that maximizes volume.  Because the point is on the boundary, there must be at least one triangle $\Delta$  with angles $(x_0, y_0, z_0)$ where one of $x_0$, $y_0$, and $z_0$ equals zero or $\pi$.
If one equals $\pi$, then condition (1) of Definition~\ref{def:AngleStruct} implies another equals zero. So we assume one of $x_0$, $y_0$, or $z_0$ equals zero.
A proposition of Gu\'eritaud, \cite[Proposition~7.1]{gf:once-punct-torus}, implies that if one of $x_0$, $y_0$, $z_0$ is zero, then another is $\pi$ and the third is also zero.  (The proposition is stated for once--punctured torus bundles in \cite{gf:once-punct-torus}, but only relies on the formula for volume of a single ideal tetrahedron, \cite[Proposition~6.1]{gf:once-punct-torus}.)

A tetrahedron with angles $0$, $0$, and $\pi$ is a flattened tetrahedron, and contributes nothing to volume.  We consider which tetrahedra might be flattened.

Let $\P_0$ be the original polyhedral decomposition described in the proof of Lemma~\ref{lemma:polyhedra}.  Suppose first that we have flattened one of the four tetrahedra which came from tetrahedra in $\P_0$.  Then the maximal volume we can obtain from these four tetrahedra is at most $3\,\vtet$, which is strictly less than $\voct$, which is the volume we obtain from these four tetrahedra from the angle structure of Lemma~\ref{lemma:angle-struct}.  Thus, by Lemma~\ref{lemma:stellatedoct}, the maximal volume we can obtain from any such angle structure is $3\, \vtet + (p-3)\voct < (p-2)\voct$.  Since the volume on the right is realized by an angle structure in the interior by Lemma~\ref{lemma:angle-struct}, the maximum of the volume cannot occur at such a point of the boundary.

Now suppose one of the four tetrahedra coming from an octahedron is flattened.  Then the remaining three tetrahedra can have volume at most $3\, \vtet < \voct$.  Thus the volume of such a structure can be at most $4\, \vtet + 3\, \vtet + (p-4)\, \voct$, where the first term comes from the maximum volume of the four tetrahedra in $\P_0$, the second from the maximum volume of the stellated octahedron with one flat tetrahedron, and the last term from the maximal volume of the remaining ideal octahedra.  Because $7\,\vtet < 2\, \voct$, the volume of this structure is still strictly less than that of Lemma~\ref{lemma:angle-struct}.

Therefore, there does not exist $X$ on the boundary of the space of angle structures that maximizes volume. 
\end{proof}

\begin{theorem}
\label{thm:WUaxis}
If $p>3$, then
\[ \voct\,(p-2)q \leq \vol(W(p,q)\cup B) %
<
(\voct\,(p-3)+4\vtet)q.\]
If $p=3$, then $\displaystyle \vol(W(3,q)\cup B) = 4q\,\vtet$.
\end{theorem}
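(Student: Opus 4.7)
The plan is to reduce the statement to the case $q=1$ via the covering $S^3-(W(p,q)\cup B)\to S^3-(W(p,1)\cup B)$, which multiplies hyperbolic volumes by $q$, and then apply Casson--Rivin on both sides. The case $p=3$ is then immediate from Theorem~\ref{thm:3-strand}, so from now on I fix $p>3$ and work with the ideal triangulation $\P$ of $S^3-(W(p,1)\cup B)$ obtained by stellating the octahedra of Lemma~\ref{lemma:polyhedra}. By Lemma~\ref{lemma:crit-at-interior}, the critical point of the concave volume functional $\vol\co\A(\P)\to\RR$ lies in the interior of $\A(\P)$, so the Casson--Rivin theorem identifies this critical point with the complete hyperbolic structure and with the global maximum of $\vol$ on $\A(\P)$; in particular every value of $\vol$ on $\A(\P)$ is a lower bound for, and the supremum equals, $\vol(W(p,1)\cup B)$.

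For the lower bound, Lemma~\ref{lemma:angle-struct} supplies an explicit angle structure on $\P$ whose total volume is $\voct(p-2)$. Applying the preceding paragraph to this angle structure gives $\vol(W(p,1)\cup B)\geq\voct(p-2)$, and multiplying by $q$ yields the desired inequality.

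For the upper bound, any angle structure on $\P$ decomposes its volume as a sum over the four tetrahedra and $p-3$ stellated octahedra of Lemma~\ref{lemma:polyhedra}. Each tetrahedron contributes at most $\vtet$, and by Lemma~\ref{lemma:stellatedoct} each stellated octahedron contributes at most $\voct$, so the identification of hyperbolic volume with $\max_{\A(\P)}\vol$ yields $\vol(W(p,1)\cup B)\leq 4\vtet+(p-3)\voct$. To promote this to the strict inequality claimed, note that equality would force the maximizing angle structure to make every tetrahedron regular (all dihedral angles $\pi/3$) and every octahedron regular (all dihedral angles $\pi/2$) simultaneously. But then the crossing edge labeled $1$ in the proof of Lemma~\ref{lemma:angle-struct}, which is glued four times inside the tetrahedra $T_1$ and $T_1'$ and once inside the adjacent octahedron, would carry total dihedral angle $4(\pi/3)+\pi/2=11\pi/6\ne 2\pi$, contradicting condition~(3) of Definition~\ref{def:AngleStruct}. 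Multiplying the resulting strict bound by $q$ completes the proof. I expect this strictness check to be the main obstacle: the weak upper bound is a routine per-polyhedron accounting, but ruling out equality requires identifying an edge whose gluing combinatorics is incompatible with simultaneous regularity of all surrounding polyhedra, and the "boundary" crossings at the ends of the braid, where a single octahedron abuts two tetrahedra rather than four octahedra meeting as in the interior, provide exactly such an edge.
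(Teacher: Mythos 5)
Your proposal is correct, and for the $p=3$ case and the lower bound it coincides with the paper's proof: quote Theorem~\ref{thm:3-strand}, then combine Lemma~\ref{lemma:crit-at-interior} with Casson--Rivin to conclude that the explicit angle structure of Lemma~\ref{lemma:angle-struct} gives a lower bound of $\voct(p-2)q$. Where you diverge is the upper bound. The paper simply cites Corollary~\ref{cor:upper-bound}, whose proof straightens the polyhedral decomposition of $S^3-(W(p,q)\cup B)$ and bounds each piece by $\vtet$ or $\voct$ (the strict inequality in that corollary comes from Dehn filling, so it is really a statement about $W(p,q)$ rather than $W(p,q)\cup B$). You instead stay entirely inside the angle-structure framework: since Casson--Rivin identifies $\vol(W(p,1)\cup B)$ with $\max_{\A(\P)}\vol$, you bound each tetrahedron by $\vtet$ and each stellated octahedron by $\voct$ via Lemma~\ref{lemma:stellatedoct}, and then rule out equality by observing that simultaneous regularity of all polyhedra would force the angle sum around the crossing edge labeled $1$ to be $4(\pi/3)+\pi/2=11\pi/6\neq 2\pi$. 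This is a legitimate and self-contained alternative; indeed it is arguably more careful than the paper on one point, since it directly justifies the strict inequality for the augmented link $W(p,q)\cup B$ rather than only for the filled manifold. The cost is that your argument leans on the full strength of Casson--Rivin (volume of the complete structure equals the maximum over angle structures), whereas the paper's straightening argument for the upper bound is independent of the angle-structure machinery and of Lemma~\ref{lemma:crit-at-interior}.
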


\begin{proof}
Theorem~\ref{thm:3-strand} provides the $p=3$ case.

For $p>3$, Casson and Rivin showed that if the critical point for the volume is in the interior of the space of angle structures, then the maximal volume angle structure is realized by the actual hyperbolic structure \cite{rivin}. By Lemma~\ref{lemma:crit-at-interior}, the critical point for volume is in the interior of the space of angle structures.  By Lemma~\ref{lemma:angle-struct}, the volume of one particular angle structure is $\voct\,(p-2)q$.  So the maximal volume must be at least this.
The upper bound is from Corollary~\ref{cor:upper-bound}.
\end{proof}

Since $S^3-W(p,q)$ is obtained from $S^3-(W(p,q)\cup B)$ by Dehn filling along a meridian slope, we obtain geometric information on $W(p,q)$ given information on the geometry of this slope. In particular, the boundary of any embedded horoball neighborhood of the cusp $B$ inherits a Euclidean metric, and a closed geodesic representing the meridian inherits a length in this metric, called the \emph{slope length}. Note that slope length depends on choice of horoball neighborhood of $B$. Throughout, we will choose the horoball neighborhood of $B$ to be maximal, meaning it is tangent to itself. 

\begin{lemma}\label{lemma:meridian-length}
The length of a meridian of the braid axis is at least $q$.
\end{lemma}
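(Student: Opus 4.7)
The plan is to use the fact that $\tilde{M} = S^3 - (W(p,q)\cup B)$ is a $q$--fold cyclic cover of $M = S^3 - (W(p,1)\cup B)$, where the deck transformation $\rho$ is the rotation of $S^3$ by $2\pi/q$ about the braid axis $B$. Note that $\rho$ preserves $W(p,q)$ since it cyclically permutes the $q$ identical blocks of its braid word $(\sigma_1\cdots\sigma_{p-1})^q$.

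First I would analyze $\rho$ on the cusp torus $\tilde T$ of $B$ in $\tilde M$. Since $\rho$ is a rotation around $B$, on $\tilde T$ it acts as a Euclidean translation in the meridian direction by $\frac{1}{q}\tilde\mu$, where $\tilde\mu$ is the upstairs meridian of $B$. Hence the quotient torus $T = \tilde T/\langle\rho\rangle$, which is the cusp of $B$ in $M$, has meridian $\mu$ of length $|\tilde\mu|/q$. Equivalently, for any embedded horoball neighborhood $H$ of $B$ in $M$, its preimage $\pi^{-1}(H)$ is an embedded neighborhood of $B$ in $\tilde M$ on whose boundary $\tilde\mu$ has length exactly $q|\mu|$.

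Second, I would take $H$ to be the maximal self-tangent cusp of $B$ in $M$, and observe that $\pi^{-1}(H)$ is an embedded cusp in $\tilde M$, hence is contained in the maximal self-tangent cusp of $B$ in $\tilde M$. Since uniform expansion of a horoball only increases Euclidean lengths on the cusp cross-section, the meridian length on the maximal cusp of $B$ in $\tilde M$ is at least $q|\mu|_{\max}$, where $|\mu|_{\max}$ is the length of $\mu$ on the maximal cusp of $B$ in $M$.

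The remaining --- and main --- step is to show $|\mu|_{\max}\geq 1$. Using the cusp decomposition from Lemma~\ref{lemma:polyhedra}, $\mu$ appears as a specific diagonal edge of one of the four triangular pieces in the cusp tiling, which together with the $2(p-3)$ quadrilaterals tiles $\partial H$. Combined with the self-tangency condition on the maximal cusp, this should force the diagonal edge to have length at least $1$. The main technical obstacle here is that for $p > 3$ the true hyperbolic structure on $M$ is not explicit (unlike the case $p = 3$ of Theorem~\ref{thm:3-strand}, where all four tetrahedra are regular ideal and one can compute $|\mu|_{\max}$ directly); one would likely need either a comparison argument with the $p=3$ case or an intrinsic universal bound on slope length coming from cusp self-tangency.
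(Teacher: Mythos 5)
Your covering-space reduction is correct and matches the paper's: the meridian of $B$ upstairs is a $q$--fold cover of the meridian downstairs, the preimage of the maximal cusp of $B$ in $S^3-(W(p,1)\cup B)$ is an embedded horocusp in $S^3-(W(p,q)\cup B)$ and hence contained in the maximal cusp there, and enlarging the horoball only increases slope lengths. So the lemma reduces, exactly as you say, to showing that the meridian of the braid axis of $W(p,1)$ has length at least $1$ on the maximal cusp.

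The gap is that you leave this last step --- which you correctly flag as the main one --- unresolved, and the route you sketch (extracting the bound from the combinatorics of the cusp tiling in Lemma~\ref{lemma:polyhedra}, or comparing with the $p=3$ case) is not the right tool: for $p>3$ the tetrahedron and octahedron shapes are not explicit, so the Euclidean lengths of the edges in Figure~\ref{fig:cusp-triang} are not computable, and nothing in that picture by itself prevents the meridian diagonal from being short. What closes the gap is the \emph{universal} bound you mention only in passing: in any cusped hyperbolic $3$--manifold, every essential slope on the boundary of a maximal (self-tangent) horocusp has length at least $1$. This is standard (see \cite{adams:waist} and \cite{thurston:notes}, which is all the paper invokes): normalize so the cusp lifts to the horoball $\{z\geq 1\}$ in upper half-space; embeddedness forces every other horoball in its orbit to have Euclidean diameter at most $1$, and maximality produces one of diameter exactly $1$; a nontrivial parabolic $z\mapsto z+t$ in the cusp subgroup must carry that full-size horoball to a disjoint full-size horoball, which forces $|t|\geq 1$. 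Applying this to the meridian of $B$ in $S^3-(W(p,1)\cup B)$ and combining with your covering argument gives length at least $q$ upstairs, with no case analysis on $p$ needed.
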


\begin{proof}
A meridian of the braid axis of $W(p,q)$ is a $q$--fold cover of a meridian of the braid axis of $W(p,1)$.
In a maximal cusp neighborhood,
the meridian of the braid axis of $W(p,1)$ must have length at least one (see \cite{adams:waist, thurston:notes}).
Hence the meridian of the braid axis of $W(p,q)$ has length at least $q$.
\end{proof}

We can now prove our main result on volumes of weaving knots:

\begin{proof}[Proof of Theorem \ref{thm:lower-bound}]
The upper bound is from Corollary~\ref{cor:upper-bound}.

As for the lower bound, the manifold $S^3-W(p,q)$ is obtained by Dehn filling the meridian on the braid axis of $S^3-(W(p,q)\cup B)$.
When $q>6$, Lemma~\ref{lemma:meridian-length} implies that the meridian of the braid axis has length greater than $2\pi$, and so \cite[Theorem~1.1]{fkp:volume} will apply. Combining
\cite[Theorem~1.1]{fkp:volume}
with Theorem~\ref{thm:WUaxis} implies,
for $p>3$,
\[
\left(1 -\left(\frac{2\pi}{q}\right)^2\right)^{3/2} ( (p-2)\, q \, \voct ) \leq \vol(S^3-W(p,q)).
\]
For $p=3$,
\[
\left(1 -\left(\frac{2\pi}{q}\right)^2\right)^{3/2} ( 4\, q \, \vtet ) \leq \vol(S^3-W(3,q)).
\]
Since $\voct < 4\vtet$, this equation gives the desired lower bound when $p=3$. Thus we have the result for all $p\geq 3$.
\end{proof}

\begin{corollary}\label{cor:v3_links}
The links $K_n = W(3,n)$ satisfy $\displaystyle \lim_{n\to\infty}\frac{\vol(K_n)}{c(K_n)}=2\vtet.$
\end{corollary}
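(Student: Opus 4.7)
The plan is to sandwich the volume density $\vol(K_n)/c(K_n)$ between two explicit expressions that both tend to $2\vtet$ as $n\to\infty$, and conclude by the squeeze theorem. Since $c(W(p,q)) = q(p-1)$, the crossing number satisfies $c(K_n) = 2n$, so every bound on $\vol(K_n)$ in terms of $n$ translates immediately into a bound on the density.

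For the upper bound I would simply apply Corollary~\ref{cor:upper-bound} with $p=3$, which gives $\vol(K_n) < (4\vtet + 0 \cdot \voct)\,n = 4n\vtet$, and hence
\[ \frac{\vol(K_n)}{c(K_n)} < 2\vtet. \]

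The more delicate point is the lower bound. The inequality as printed in Theorem~\ref{thm:lower-bound} specializes at $p=3$ to $\voct\, q\,(1-(2\pi/q)^2)^{3/2}$, which divided by $2q$ tends only to $\voct/2 < 2\vtet$ and is therefore not sharp enough. The remedy is to use the $p=3$-specific lower bound that appears inside the proof of Theorem~\ref{thm:lower-bound}: Theorem~\ref{thm:3-strand} provides the exact equality $\vol(W(3,n)\cup B) = 4n\vtet$, and Lemma~\ref{lemma:meridian-length} shows that for $n>6$ the meridian of the braid axis has length exceeding $2\pi$, so the Dehn filling estimate \cite[Theorem~1.1]{fkp:volume} applies and yields
\[ \vol(K_n) \;\geq\; 4n\vtet\left(1-\left(\frac{2\pi}{n}\right)^2\right)^{3/2}. \]
Dividing by $c(K_n) = 2n$ gives
\[ \frac{\vol(K_n)}{c(K_n)} \;\geq\; 2\vtet\left(1-\left(\frac{2\pi}{n}\right)^2\right)^{3/2}. \]

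Letting $n\to\infty$, the lower bound tends to $2\vtet$ while the upper bound is already $2\vtet$, so the squeeze theorem finishes the argument. There is no real obstacle here, only a minor conceptual subtlety: one must resist reading off the lower bound from Theorem~\ref{thm:lower-bound} directly, since for $p=3$ the correct reference point is the exact tetrahedral decomposition of Theorem~\ref{thm:3-strand}, whose $4\vtet$ is strictly larger than $\voct$ and matches the upper bound exactly in the limit.
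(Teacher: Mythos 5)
Your proposal is correct and follows essentially the same route as the paper: the paper's own proof also combines the exact value $\vol(W(3,q)\cup B)=4q\,\vtet$ from Theorem~\ref{thm:3-strand} with the Dehn filling estimate of \cite[Theorem~1.1]{fkp:volume} (via Lemma~\ref{lemma:meridian-length}) for the lower bound, and the strict decrease of volume under Dehn filling for the upper bound $4q\,\vtet$. Your observation that one must not read the lower bound off Theorem~\ref{thm:lower-bound} directly at $p=3$ is exactly the point, and is handled identically in the paper.
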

\begin{proof}
By Theorem~\ref{thm:3-strand} and the same argument as above, for $q>6$ we have
\[
\left(1 -\left(\frac{2\pi}{q}\right)^2\right)^{3/2} (4q\,\vtet) \leq \vol(S^3-W(3,q))
<
4q\,\vtet. \qedhere
\]
\end{proof}

\section{Geometric convergence of weaving knots}\label{sec:geomconv}

In this section, we will prove Theorem~\ref{thm:geolimit}, which states that as $p,q\to\infty$, the manifold $S^3-W(p,q)$ approaches $\RR^3-\W$ as a geometric limit.  

A regular ideal octahedron is obtained by gluing two square pyramids, which we will call the {\em top} and {\em bottom} pyramids. The manifold $\RR^3-\W$ is cut into square pyramids, which are glued into ideal octahedra, by a decomposition similar to that in Lemma~\ref{lemma:polyhedra}.
We give a sketch of the decomposition here; a more detailed description is given in \cite{ckp:gmax}. 

First, note that $\RR^3-\W$ admits a $\ZZ^2$ symmetry. The quotient of $\RR^3-\W$ by $\ZZ^2$ gives a link in the manifold $T^2\times [-1,1]$, with four strands forming a square on the torus $T^2$, with alternating crossings. As in the proof of Lemma~\ref{lemma:polyhedra}, take an edge for each of these four crossings. For each of the strands, take an additional edge running from that strand to $T^2\times\{+1\}$. These crossing edges bound squares on the projection plane $T^2\times\{0\}$. The additional edges give boundary edges of a \emph{top} pyramid. Similarly, edges running from strands on $T^2\times\{0\}$ to $T^2\times\{-1\}$ give edges of \emph{bottom} pyramids. When we apply the $\ZZ^2$ action, we obtain a division of $\RR^3-\W$ into $\tilde{X_1}$, obtained by gluing top pyramids along triangular faces, and $\tilde{X_2}$, obtained by gluing bottom pyramids. A fundamental domain $\P_{\W}$ for $R^3-\W$ in $\HH^3$ is explicitly obtained by attaching each top pyramid of $\tilde{X_1}$ to a bottom pyramid of $\tilde{X_2}$ along their common square face, obtaining an octahedron. In \cite[Theorem~3.1]{ckp:gmax}, we show that a complete hyperbolic structure on $\RR^2-\W$ is obtained when each octahedron is given the structure of a regular ideal octahedron. Thus the universal cover of $\RR^3-\W$ is obtained by tesselating $\HH^3$ with ideal octahedra. Figure~\ref{fig:W_top}(a) shows how the square pyramids in $\tilde{X_1}$ are viewed from infinity on the $xy$-plane.

An appropriate $\pi/2$ rotation is needed when gluing the square faces of $\tilde{X_1}$ and $\tilde{X_2}$, which determines how adjacent triangular faces are glued to obtain $\P_{\W}$.  Figure~\ref{fig:weave-decom} shows the face pairings for the triangular faces of the bottom square pyramids, and the associated circle pattern. The face pairings are equivariant under the translations $(x,y) \mapsto (x\pm 1,y\pm 1)$. That is, when a pair of faces is identified, then the corresponding pair of faces under this translation is also identified.

The proof below provides the geometric limit of the polyhedra described in Section~\ref{sec:triangulation}.  We will see that these polyhedra converge as follows.  If we cut the torus in Figure~\ref{fig:polyhedra} in half along the horizontal plane shown, each half is tessellated mostly by square pyramids, as well as some tetrahedra.  As $p,q \to \infty$, the tetrahedra are pushed off to infinity, and the square pyramids converge to the square pyramids that are shown in Figure~\ref{fig:W_top}. Gluing the two halves of the torus along the square faces of the square pyramids, in the limit we obtain the tessellation by regular ideal octahedra.

\begin{figure}[h]
\begin{tabular}{ccc}
 \includegraphics[scale=0.5]{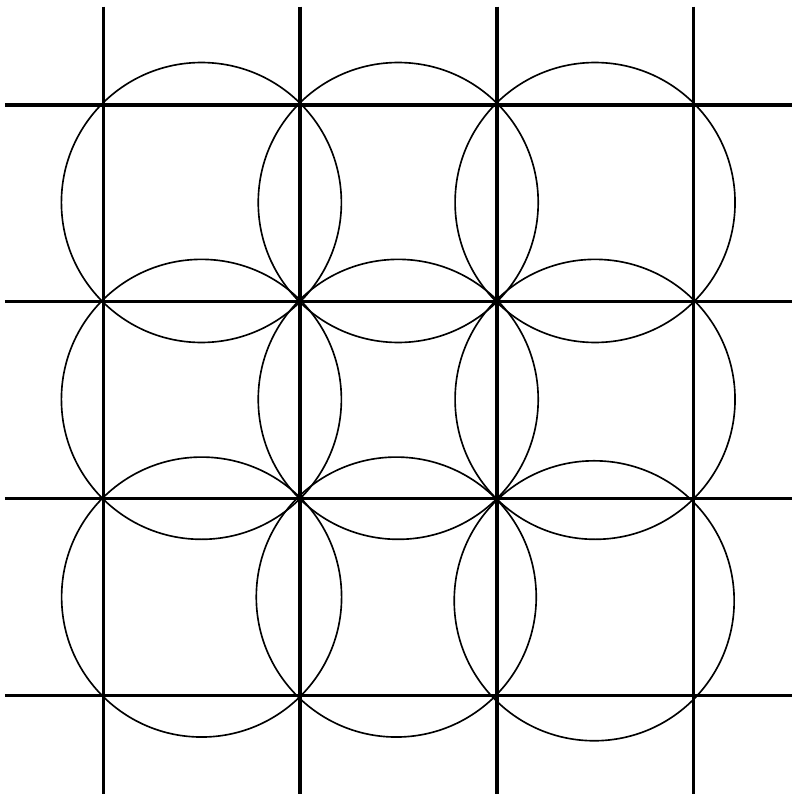} & \qquad &
 \includegraphics[scale=1.1]{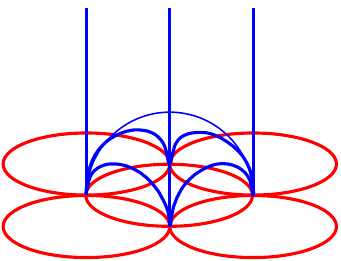} \\
(a) & \qquad & (b) \\
\end{tabular}
\caption{(a) Circle pattern for hyperbolic planes of the top polyhedron of $\R^3-\W$.
  (b) Hyperbolic planes bounding one top square pyramid.}
  \label{fig:W_top}
\end{figure}

\begin{figure}
  \import{figures/}{weave-decom-in-bkp.pdf_t}\\
  \caption{Face pairings for a fundamental domain $\P_{\W}$ of $\R^3-\W$. 
}
  \label{fig:weave-decom}
\end{figure}

To make this precise, we review the definition of a geometric limit. We use bilipschitz convergence (also called ``quasi-isometry'' in \cite{BenedettiPetronio}). Convergence of metric spaces was studied in detail by Gromov \cite{Gromov}. Careful treatments of geometric limits in the hyperbolic case are given, for example, in \cite{CanaryEpsteinGreen:Notes} and \cite[Chapter~E]{BenedettiPetronio}. The formulation below will suffice for our purposes. 

\begin{define}\label{def:bilip}
For compact metric spaces $X$ and $Y$, define their \emph{bilipschitz distance} to be
\[
\inf \{ |\log \lip(f) | + |\log \lip(f^{-1})| \}
\]
where the infimum is taken over all bilipschitz mappings $f$ from $X$ to $Y$, and $\lip(f)$ denotes the lipschitz constant;
i.e.,\ the minimum value of $K$ such that
\[\frac{1}{K} \, d(x, y) \leq d(f(x),f(y) \leq K\,d(x, y)\]
for all $x, y$ in $X$. 
The lipschitz constant is defined to be infinite if there is no bilipschitz map between $X$ and $Y$.

A sequence $\{(X_n,x_n)\}$ of locally compact complete length metric spaces with distinguished basepoints is said to \emph{converge in the pointed bilipschitz topology} to $(Y,y)$ if for any $R>0$, the closed balls $B_R(x_n)$ of radius $R$ about $x_n$ in $X_n$ converge to the closed ball $B_R(y)$ about $y$ in $Y$ in the bilipschitz topology; i.e.,\ in the topology on the space of all compact metric spaces given by bilipschitz distance.
\end{define}

\begin{define}\label{def:geomlimit}
For $X_n$, $Y$ locally compact complete metric spaces, we say that $Y$ is a \emph{geometric limit} of $X_n$ if there exist basepoints $y\in Y$ and $x_n \in X_n$ such that $(X_n, x_n)$ converges in the pointed bilipschitz topology to $(Y,y)$.
\end{define}

In order to prove Theorem~\ref{thm:geolimit}, we will consider
$M_{p,q} := S^3-(W(p,q)\cup B)$.
First we will show $\RR^3-\W$ is a geometric limit of $M_{p,q}$, and then use this to show that it follows that $\RR^3-\W$ is a geometric limit of $S^3-W(p,q)$. To show $\RR^3-\W$ is a geometric limit of $M_{p,q}$, we need to find basepoints $x_{p,q}$ for each $M_{p,q}$ so that closed balls $B_R(x_{p,q})$ converge to a closed ball in $\RR^3-\W$. We do this by considering structures on ideal polyhedra.

Let $\P_{p,q}$ denote the collection of ideal polyhedra in the decomposition of $M_{p,q}$ from the proofs of Lemma~\ref{lemma:polyhedra} and Lemma~\ref{lemma:angle-struct}.
The polyhedra of $\P_{p.q}$ consist of ideal tetrahedra and ideal octahedra, such that octahedra corresponding to non-peripheral squares of the diagram projection graph of $W(p,q)$ satisfy the same local gluing condition on the faces as that for the octahedra for $\R^3-\W$ as illustrated in Figure~\ref{fig:weave-decom}.
In particular, the faces of each octahedron are glued to faces of adjacent octahedra, with the gluings of the triangular faces of the top and bottom square pyramids locally the same as those for $\P_{\W}$.

We find a sequence of consecutive octahedra in $M_{p,1} = S^3-(W(p,1)\cup B)$ with volume approaching $\voct$, and then use the $q$--fold cover $M_{p,q} \to M_{p,1}$ to find a grid of octahedra in $M_{p,q}$ all of which have volume nearly $\voct$.

\begin{lemma} \label{lem:big-oct-row}
There exist $k \to \infty,\ \epsilon(k) \to 0$, and $n(k) \to \infty$ such that for $p\geq n(k)$ there exist at least $k$ consecutive ideal octahedra in $\P_{p,1}$ 
each of which has volume greater than $(\voct - \epsilon(k))$.
\end{lemma}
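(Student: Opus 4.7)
The plan is to combine the sharp lower bound $\vol(M_{p,1})\geq \voct(p-2)$ from Theorem~\ref{thm:WUaxis} with a pigeonhole argument on the linearly arranged octahedra of $\P_{p,1}$. The starting observation is that the upper bound from Corollary~\ref{cor:upper-bound} and the lower bound from Theorem~\ref{thm:WUaxis} differ by only the constant $4\vtet-\voct$, so the total volume "deficit" of the polyhedral pieces (relative to their regular maxima) is uniformly bounded in $p$.

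First, I would invoke Lemma~\ref{lemma:crit-at-interior} together with Rivin's maximum principle to conclude that the complete hyperbolic structure on $M_{p,1}$ is geometrically realized by the triangulation obtained by stellating the octahedra of $\P_{p,1}$ (Lemma~\ref{lemma:angle-struct}). Consequently each octahedron $O_i$ ($i=1,\dots,p-3$) and each of the four tetrahedra $T_j$ carries a genuine hyperbolic volume $V_i$ and $W_j$ respectively, satisfying $V_i\leq \voct$ by Lemma~\ref{lemma:stellatedoct} and $W_j\leq \vtet$ by the standard maximality of the regular ideal tetrahedron, and $\sum_i V_i + \sum_j W_j = \vol(M_{p,1})$.

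Next I would compute the deficit bound. Since $\vol(M_{p,1})\geq \voct(p-2)$,
\[
\sum_{i=1}^{p-3}(\voct-V_i) \;\leq\; (p-3)\voct+4\vtet-\vol(M_{p,1}) \;\leq\; 4\vtet-\voct \;=:\; C,
\]
a constant independent of $p$. By a Markov-style estimate, for any $\epsilon>0$ the number of "bad" octahedra with $V_i\leq \voct-\epsilon$ is at most $C/\epsilon$. The octahedra of $\P_{p,1}$ are indexed in a linear sequence by the $p-3$ quadrilateral regions between the two end-triangle regions of the $p$-braid projection, so removing at most $C/\epsilon$ indices from $\{1,\dots,p-3\}$ leaves at most $C/\epsilon+1$ maximal runs of consecutive "good" indices; the longest such run has length at least $(p-3-C/\epsilon)/(C/\epsilon+1)$. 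Choosing $\epsilon(k)=1/k$ and $n(k) = 3 + Ck + k(Ck+1)$ ensures this length is at least $k$ for every $p\geq n(k)$, and clearly $\epsilon(k)\to 0$, $n(k)\to\infty$.

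The main obstacle is the first step: justifying that the octahedra of $\P_{p,1}$ really have hyperbolic volumes summing to $\vol(M_{p,1})$. This is subtle because the polyhedral decomposition itself is only topological, and requires that the refining triangulation be geometric in the complete structure. This is precisely what Casson--Rivin provides once the critical point of the volume function is shown to lie in the interior of $\A(\P)$, which is Lemma~\ref{lemma:crit-at-interior}. After that, the deficit bound and the pigeonhole are essentially immediate.
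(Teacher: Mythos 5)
Your proposal is correct and follows essentially the same route as the paper: both arguments rest on the sharp lower bound $\vol(M_{p,1})\geq \voct\,(p-2)$ from Theorem~\ref{thm:WUaxis}, the per-polyhedron maxima $\voct$ and $\vtet$, and a pigeonhole argument along the linear chain of $p-3$ octahedra. The paper runs it as a contradiction (no run of $k$ good octahedra forces at least $k^{2}$ octahedra with deficit $1/k$, pushing the total volume below $(p-2)\voct$ once $k>12$, with $n(k)=k^{3}$), whereas your direct total-deficit bound $\sum_i(\voct-V_i)\leq 4\vtet-\voct$ is an equivalent, slightly sharper packaging of the same idea.
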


\begin{proof}
Let $\epsilon(k) = \frac{1}{k}$ and $n(k) = k^3$. Suppose there are no $k$ consecutive octahedra each of whose volume is greater than $\voct-\epsilon(k)$. This implies that there exist at least $n(k)/k = k^2$ octahedra each of which has volume at most $\voct-\epsilon(k)$. Hence for $k > 12$ and $p > n(k)$,
\begin{align*}
\vol(M_{p,1}) &  \leq   4\vtet +(p-k^2)\voct + k^2(\voct -1/k) \\
&=   4\vtet + p \voct - k \\
&=  (p-2)\voct + 4\vtet + 2\voct -k \\ 
& <   (p-2) \voct .
\end{align*}
This contradicts Theorem \ref{thm:WUaxis}, which says that $(p-2) \voct < \vol(M_{p,1})$.
\end{proof}

\begin{corollary}\label{cor:big-oct-grid}
For any $\epsilon>0$ and any $k>0$ there exists $N$ such that if $p,q>N$ then $\P_{p,q}$ contains a $k\times k$ grid of adjacent ideal octahedra, each of which has volume greater than $(\voct - \epsilon)$.
\end{corollary}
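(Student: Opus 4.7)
The plan is to combine Lemma~\ref{lem:big-oct-row} with the fact that $M_{p,q}$ is a $q$--fold cyclic cover of $M_{p,1}$, so that a single row of large-volume octahedra in $\P_{p,1}$ lifts to a grid of large-volume octahedra in $\P_{p,q}$.

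More precisely, given $\epsilon>0$ and $k>0$, I would first invoke Lemma~\ref{lem:big-oct-row} to select an index $k' \geq k$ with $\epsilon(k') < \epsilon$, and set $N_1 = n(k')$. Then for every $p \geq N_1$ the decomposition $\P_{p,1}$ contains at least $k'$ consecutive ideal octahedra along the braid direction, each of volume strictly greater than $\voct - \epsilon$. These octahedra correspond to $k'$ consecutive quadrilateral regions in the projection plane of $W(p,1)$, and are therefore glued to one another along square faces coming from the shared crossing arcs in that projection plane.

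Next, the covering map $M_{p,q} \to M_{p,1}$ unwinds the braid closure and is a local isometry with cyclic deck group of order $q$. It lifts $\P_{p,1}$ to $\P_{p,q}$, so each ideal octahedron below has $q$ preimages of equal volume above, cyclically permuted by the deck group. Lifting the row of $k'$ consecutive octahedra therefore yields $q$ translates of that row in $\P_{p,q}$, each octahedron of which again has volume greater than $\voct - \epsilon$. The key observation is that successive lifts of a single row share square faces in $\P_{p,q}$, namely the faces that in $\P_{p,1}$ are identified to one another across the braid closure; after lifting, these become honest internal gluings between distinct octahedra. Consequently the $q$ lifted rows assemble into a $k' \times q$ grid of face-adjacent octahedra in $\P_{p,q}$.

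Finally, setting $N = \max(N_1, k)$, whenever $p, q > N$ one has both $k' \geq k$ and $q > k$, so the $k' \times q$ grid contains a $k \times k$ subgrid of adjacent octahedra each with volume greater than $\voct - \epsilon$. The main obstacle is the combinatorial bookkeeping in the previous paragraph: one must check carefully that the deck transformation identifies faces precisely along the closure of the braid, so that the $q$ lifted copies of the row stack into a connected grid rather than ending up as disjoint rows. This ultimately follows from the explicit face identifications of $\P_{p,1}$ in Lemma~\ref{lemma:polyhedra} together with the standard structure of the cyclic cover, and constitutes the only nontrivial step in the argument.
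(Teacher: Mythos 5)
Your proof is correct and follows essentially the same route as the paper: apply Lemma~\ref{lem:big-oct-row} with $k$ large enough that $\epsilon(k)<\epsilon$, then lift the resulting row of near-maximal-volume octahedra through the $q$--fold cover $M_{p,q}\to M_{p,1}$ to obtain a $k\times q$ grid, which contains the desired $k\times k$ subgrid once $q>k$. The only quibble is descriptive: adjacent octahedra in the decomposition are glued along pairs of \emph{triangular} faces (the square faces are internal, where top and bottom pyramids meet), but this does not affect the argument.
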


\begin{proof}
Apply Lemma~\ref{lem:big-oct-row}, taking $k$ sufficiently large so that $\epsilon(k)<\epsilon$.  Then for any $N>n(k)$, if $p>N$ we obtain at least $k$ consecutive ideal octahedra with volume as desired. Now let $q>N$, so $q>k$.  Use the $q$--fold cover $M_{p,q} \to M_{p,1}$. We obtain a $k\times q$ grid of octahedra, all of which have volume greater than $(\voct-\epsilon(k))$, as shown in Figure~\ref{fig:big-oct-grid}.
\end{proof}

\begin{figure}[h]
\includegraphics[height=1in]{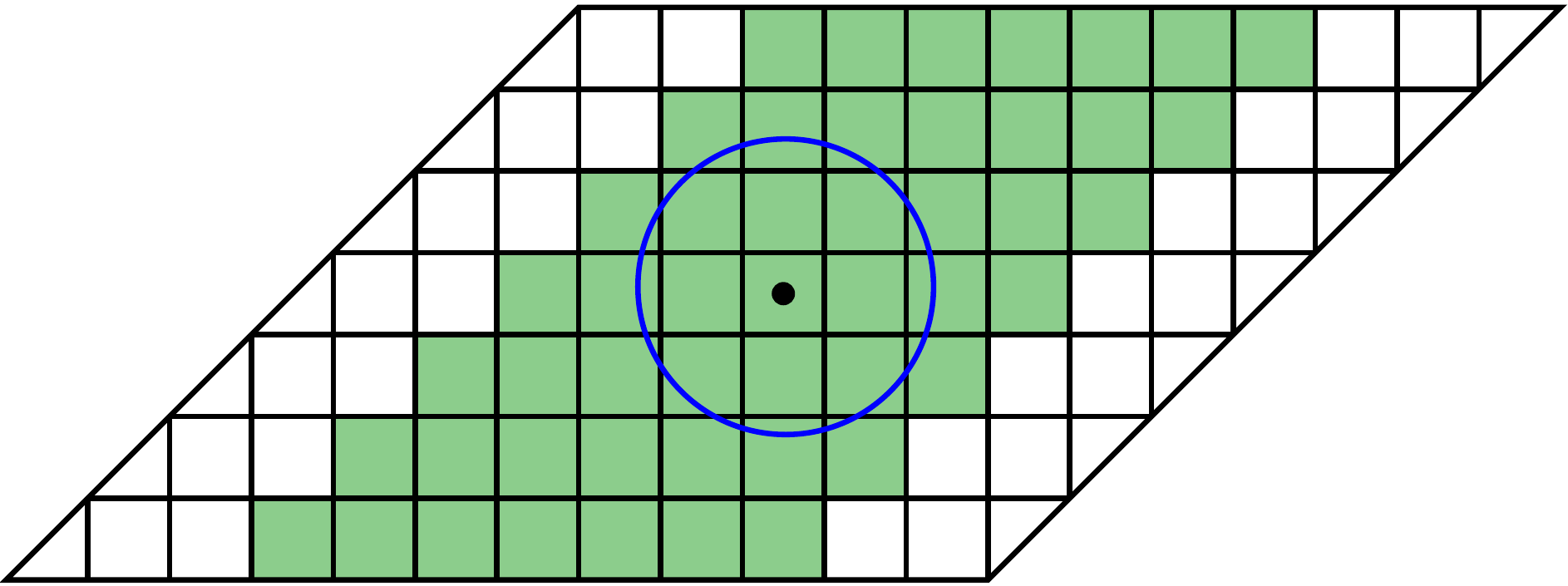}
\caption{Grid of octahedra with volumes near $\voct$ in $\P_{p,q}$, and 
base point. }
\label{fig:big-oct-grid}
\end{figure}

We are now ready to complete the proof of Theorem~\ref{thm:geolimit}.

\begin{proof}[Proof of Theorem~\ref{thm:geolimit}]
Given $R>0$, we will show that closed balls based in $M_{p,q}$ converge to a closed ball based in $\RR^3-\W$.

Take a basepoint $y \in \RR^3-\W$ to lie in the interior of any octahedron, on one of the squares projecting to a checkerboard surface, say at the point where the diagonals of that square intersect. Consider the ball $B_R(y)$ of radius $R$ about the basepoint $y$. This will intersect some number of regular ideal octahedra. Notice that the octahedra are glued on all faces to adjacent octahedra, by the gluing pattern we obtained in Figure~\ref{fig:weave-decom}.  Consider all octahedra in $\RR^3-\W$ that meet the ball $B_R(y)$.
Call this collection of octahedra $\Oct(R)$.

In $M_{p,q}$, consider an octahedron of Lemma~\ref{lemma:polyhedra} coming from a square in the interior of the diagram of $W(p,q)$, so that the octahedron is glued only to other octahedra in the polyhedral decomposition.
Then the gluing pattern on each of its faces agrees with the gluing of octahedra in $\RR^3-\W$. Thus for $p$, $q$ large enough, we may find a collection of adjacent octahedra $\Oct_{p,q}$ in $M_{p,q}$ with the same combinatorial gluing pattern as $\Oct(R)$. Since all the octahedra are glued along faces to adjacent octahedra, Corollary~\ref{cor:big-oct-grid} implies that if we choose $p$, $q$ large enough, then each ideal octahedron in $\Oct_{p,q}$ has volume within $\epsilon$ of $\voct$. 

It is known that the volume of a hyperbolic ideal octahedron is uniquely maximized by the volume of a regular ideal octahedron (see, e.g.\ \cite[Theorem~10.4.7]{ratcliffe}). Thus as $\epsilon\to 0$, each ideal octahedron of $\Oct_{p,q}$ must be converging to a regular ideal octahedron. So $\Oct_{p,q}$ converges as a polyhedron to $\Oct(R)$. But then it follows that for suitable basepoints $x_{p,q}$ in $\P_{p,q}$, the balls $B_R(x_{p,q})$ in $\P_{p,q} \subset M_{p,q}$ converge to $B_R(y)$ in the pointed bilipschitz topology.

Finally, we use the fact that $M_{p,q} = S^3-(W(p,q)\cup B)$ converges to $\RR^3-\W$ geometrically to show that $S^3-W(p,q)$ also converges to $\RR^3-\W$ geometrically. This will follow from the drilling/filling theorems of Hodgson and Kerckhoff \cite{HodgsonKerckhoff}, Brock and Bromberg \cite{BrockBromberg}, using the formulation of Magid \cite{Magid}. Recall that $S^3-W(p,q)$ is obtained from $M_{p,q}$ by Dehn filling the meridian slope of the cusp of $M_{p,q}$ corresponding to the braid axis $B$ of $W(p,q)$. The drilling and filling theorems control geometry change under Dehn filling, provided the normalized length of the Dehn filling slope is sufficiently long, where \emph{normalized length} is defined to be the length of the slope divided by the square root of the area of the cusp torus containing the slope. 

By Lemma~\ref{lemma:meridian-length}, the length of the Dehn filling slope is at least $q$. Moreover, because the braid axis of $W(p,q)$ is $q$-fold covered by the braid axis of $W(p,1)$, the area of the cusp corresponding to $B$ of $M_{p,q}$ is $q$ times the area of the cusp corresponding to the braid axis for $M_{p,1}$. Thus the normalized length of the slope is at least $c\,\sqrt{q}$, where $c$ is some positive constant. It follows that as $q$ goes to infinity, normalized length also goes to infinity.

Now apply \cite[Theorem~1.2]{Magid}. This theorem states that for any
bilipschitz constant $J>1$, and any $\epsilon>0$, there is a universal
constant $K$ such that for any geometrically finite hyperbolic
3-manifold $\hat{M}$ with no annular cusp, and with a distinguished
torus cusp $T$ (in our case, $\hat{M}=M_{p,q}$ with cusp $T=B$),
and any slope $\beta$ on $T$ with normalized length at least $K$, there
exists a $J$-bilipschitz diffeomorphism from the complement of an
$\epsilon$-thin neighborhood of the cusp $T$ in $\hat{M}$ to the
complement of an $\epsilon$-thin neighborhood of the tube about the
filled curve in the filled manifold $\hat{M}(\beta)$.  Since we
already know that compact balls in $M_{p,q}$ converge to those in
$\RR^3-\W$ in the pointed bilipschitz topology, it follows that by
choosing appropriate sequences of $\epsilon$ and $J$ in the filling
theorem, and letting $q\to\infty$, corresponding compact balls in
$S^3-W(p,q)$ also converge to those in $\RR^3-\W$, and thus $\RR^3-\W$
is a geometric limit of $S^3-W(p,q)$.
\end{proof}

\bibliographystyle{amsplain} \bibliography{references}

\end{document}